\fi \graphicspath{{noiseimages/}}
\begin{document}
\newtheorem{thm}{Theorem}
\newtheorem{lm}{Lemma}
\newtheorem{cor}{Corollary}
\newtheorem{prop}{Proposition}
\newtheorem{claim}{Claim}
\newtheorem{note}{Remark}
\theoremstyle{definition}
\newtheorem{df}{Definition}
\let\emptyset\varnothing
\let\leq\leqslant
\let\geq\geqslant

\title{Functional Galois connections and a classification of symmetric conservative clones with a finite carrier}
\author{Nikolay~L.~Polyakov\\ \small Higher School of Economics, niknikols0@gmail.com}
\date{}
\maketitle

\begin{abstract}
We propose a classification of symmetric conservative clones with a
finite carrier. For the study, we use the functional Galois
connection $(\mathrm{Inv}_Q, \mathrm{Pol}_Q)$, which is a natural
modification of the connection $(\mathrm{Inv}, \mathrm{Pol})$ based
on the preservation relation between functions $f$ on a set $A$ (of
all finite arities) and sets of functions $h\in A^Q$ for an
arbitrary set $Q$.
\end{abstract}


\section{Introduction}\label{sec:intro}

Clones of conservative functions on an arbitrary set $A$ are a
naturally generalization of closed classes of Boolean functions
preserving $\mathbf{0}$ and $\mathbf{1}$. Some important information
about conservative clones can be found in the papers
\cite{Csakany86} and \cite{Jezek95}. In 2005, S. Shelah
\cite{Shelah2005} found an unexpected application of conservative
clones to Computational Social Choice. Using the methods of
\cite{Shelah2005}, a complete classification of symmetric classes of
selection functions with the Arrow property was obtained in
\cite{Polyakov2014}. A further development of this approach requires
an explicit classification of symmetric conservative clones with a
finite carrier, as well as a description of the corresponding
fragment of the functional Galois connection $(\mathrm{Inv}_Q,
\mathrm{Pol}_Q)$. Functional Galois connection $(\mathrm{Inv}_Q,
\mathrm{Pol}_Q)$ is a natural modification of the connection
$(\mathrm{Inv}, \mathrm{Pol})$ based on the preservation relation
between functions $f$ on a set $A$ (of all finite arities) and sets
of functions $h\in A^Q$ for an arbitrary set $Q$. The
$(\mathrm{Inv}_Q, \mathrm{Pol}_Q)$-connection is a convenient tool
for studying closed classes of discrete functions since, on the one
hand, it is easily transformed into a $(\mathrm{Inv},
\mathrm{Pol})$-connection and, on the other hand, it is closely
related to the functional closure.

\section{Notation and basic definitions}
\par Let $A$ be an arbitrary non-empty set. For
any set $Q$ the symbol $A^Q$ denotes the set of all function
$f\colon Q\to A$. Elements of the Cartesian power $A^{n}$,
\mbox{$n<\omega$}, are identified with sequences
$(a_0,a_1,\ldots,a_{n-1})$, $a_i\in A$, $i<n$, i.e. with functions
$\mathbf{a}\colon n=\{\,0,1,\ldots,n-1\,\}\to A$; therefore, for any
sequence \mbox{$\mathbf{a}\in A^{<\omega}$} the standard notations
$\mathrm{dom}\, \mathbf{a}$ and $\mathrm{ran}\, \mathbf{a}$ denote
the domain  and the range of $\mathbf{a}$, respectively. The set
$\{\,\mathbf{a}\in A^n\colon  |\mathrm{ran}\, \mathbf{a}|=k\,\}$ is
denoted by~$A^n_k$. In a natural sense, we use the notations
$A^{n}_{<m}= \bigcup_{k<m}A^{n}_{k}$, $A^{n}_{\leq m}=
\bigcup_{k\leq m}A^{n}_{k}$, etc. Usually, when writing sequences we
omit  special characters, i.e. instead of $(a_0,a_1,\ldots,a_{n-1})$
we write   $a_0a_1\ldots a_{n-1}$.

\par The symbol $\mathcal{O}(A)$
denotes the set of all functions over $A$ (of all arities), i.e.
$\mathcal{O}(A)=\bigcup_{n<\omega}A^{A^n}$. For any $n<\omega$ and
$\mathcal{F}\subseteq \mathcal{O}(A)$, the symbol
$\mathcal{F}_{[n]}$ denotes the set of all $n$-ary functions in
$\mathcal{F}$, i.e. $\mathcal{F}_{[n]}=\mathcal{F}\cap A^{A^n}$. For
every natural number  $n$, function $f\in \mathcal{O}(A)_{[n]}$ and
functions  $h_0,h_1, \ldots, h_{m-1}\in A^Q$ the symbol $f(h_0,h_1,
\ldots, h_{m-1})$ denotes the \emph{composition} of these functions
defined by
\[
f(h_0,h_1, \ldots, h_{m-1})(q)=f(h_0(q)h_1(q) \ldots h_{m-1}(q))
\]
for each $q\in Q$. We hope that  it is always clear from the context
what the expression $f(a_0a_1\ldots a_{n-1})$ denotes, the value of
the function on the sequence $a_0a_1\ldots a_{n-1}$ or the
composition $f$ and $a_0a_1\ldots a_{n-1}$. For example, for any
$f\colon A\to A$ and $\mathbf{a}=a_0a_1\ldots a_{n-1}\in A^n$, we
have $f(\mathbf{a})=f(a_0)f(a_1)\ldots f(a_{n-1})$ (if $n = 1$, we
identify an element $a_0\in A$ and the corresponding one-element
sequence).

\par The function $f\in\mathcal{O}(A)_{[n]}$  which
assigns to each sequence $\mathbf{a}\in A^{n}$,
$\mathbf{a}=a_0a_1\ldots a_{n-1}$, the element $a_i$ for some fixed
number  $i<n$, is called the $n$-ary $i$-th \emph{projection} and is
denoted by~$e^{n}_{i}$. The set of all projection $e\in
\mathcal{O}(A)$ is denoted by $\mathcal{E}(A)$. In accordance with
the standard notation, we sometimes write $x_i$ instead of~$e^n_i$.
For example, for any binary function $f$, we write $f(x_2, x_1)$
instead of $f(e^2_2, e^2_1)$.

\par The
restriction of a function $h\colon Q\to A$ to a set $P$ is denoted
by $h|_P$, i.e. $h|_P=h\cap (P\times A)$ (we do not assume that
$P\subseteq Q$). The symbol $h|^P$ denotes the set $\{\,f\in
A^{P\cup\mathrm{dom}\, f}\colon  f|_{\mathrm{dom}\, h}=h\,\}.$ For
any set $H$ of functions (perhaps with different domains), we denote
$H|_P=\{\,h|_P\colon h\in H\,\}$ and $H|^P=\bigcup_{h\in H}h|^P$.

\par For any $H\subseteq A^Q$, $\mathbf{h}=h_0h_1\ldots h_{n-1}\in H^{n}$ and $q\in Q$ we denote $H(q)=\{\,h(q)\colon h\in H\,\}$
and $\mathbf{h}(q)=h_0(q)h_1(q)\ldots h_{n-1}(q)$.

\par The set of
all subsets of $A$ is denoted by~$\mathscr P(A)$. The set of all
$k$-element subsets of $A$ is denoted by~$[A]^k$, i.e.
$[A]^k=\{\,B\subseteq A\colon  |B|=k\,\}$. The symbol~$S_A$ denotes
the set of all permutations of $A$.

\begin{df}
A set $\mathcal{F}\subseteq \mathcal{O}(A)$ is called a \emph{clone}
(with the carrier $A$) if it is closed with respect to composition
and contains all projections.
\end{df}

\begin{df} A \emph{natural isomorphism} from clone $\mathcal{F}$
to clone $\mathcal{G}$ with carriers $A$ and $B$, respectively, is a
pair of one-to-one functions $\sigma\colon  A\to B$ and $\tau\colon
\mathcal{F}\to \mathcal{G}$ for which
$$
f\in \mathcal{F}_{[n]} \Rightarrow \tau(f)\in \mathcal{G}_{[n]}
$$
for all $f\in \mathcal{F}$ and natural number $n$, and
$$
\sigma(f(\mathbf{a}))=\tau(f)(\sigma(\mathbf{a}))
$$
for all $f\in \mathcal{F}$ and $\mathbf{a}\in \mathrm{dom}\, f$.
\par Clones $\mathcal{F}$ and $\mathcal{G}$ are \emph{naturally isomorphic} if
there is a natural isomorphism from $\mathcal{F}$ to $\mathcal{G}$.
\end{df}

\begin{df}
A clone $\mathcal{F}\subseteq \mathcal{O}(A)$ is called
\emph{symmetric} if for any function $f\in \mathcal{O}(A)$ and
permutation $\sigma$ of $A$
$$
f\in \mathcal{F}\Rightarrow f_{\sigma}\in \mathcal{F},
$$
where for any $f\in \mathcal{O}(A)$ the function $f_{\sigma}$ is
defined by
$$
f_{\sigma}(\mathbf{a})=\sigma^{-1}(f(\sigma(\mathbf{a})))
$$
for all $\mathbf{a} \in \mathrm{dom}\, f$.
\end{df}


\section{Galois connections $(\mathrm{Inv}_Q, \mathrm{Pol}_Q)$}
\par The Galois connection  $(\mathrm{Inv}, \mathrm{Pol})$ is one of the basic concepts in the theory of discrete
functions, see \cite{Poshel}, \cite{Lau}. Galois connection
$(\mathrm{Inv}, \mathrm{Pol})$  allows one to characterize clones
using their invariant sets. Recall that a function \mbox{$f\in
\mathcal{O}(A)_{[n]}$} \emph{preserves} a predicate $P\subseteq A^m$
if for all $a^0_0a^0_1\ldots a^0_{m-1}$, $a^1_0a^1_1\ldots
a^1_{m-1}$, $\ldots$, $a^{n-1}_0a^{n-1}_1\ldots a^{n-1}_{m-1}$
from~$P$ we have

\[
f(a^0_0a^1_0\ldots a^{n-1}_0)f(a^0_1a^1_1\ldots a^{n-1}_1)\ldots
f(a^0_{m-1}a^1_{m-1}\ldots a^{n-1}_{m-1})\in P.
\]

For any set $\mathcal{F}\subseteq \mathcal{O}(A)$ the set of all
predicates $P$ such that any function $f\in \mathcal{F}$ preserves
$P$ is denoted by $\mathrm{Inv}\, \mathcal{F}$. In the opposite
direction, for any set $\mathbb{P}$ of predicates the set of all
functions $f\in  \mathcal{O}(A)$ such that $f$ preserves any
predicate $P\in \mathbb{P}$ is denoted by $\mathrm{Pol}\,
\mathbb{P}$. The pair $(\mathrm{Inv}, \mathrm{Pol})$ is a Galois
connection between boolean lattices $\mathscr
P\left(\bigcup_{m<\omega}\mathscr P(A^m)\right)$ and $\mathscr
P(\mathcal{O}(A))$. Any Galois closed set $\mathcal{F}\in
\mathcal{O}(A)$ is a clone. If $A$ is a finite set, the class of
Galois closed sets $\mathcal{F}\in \mathscr P(\mathcal{O}(A))$
coincides with the class of all clones with the carrier~$A$.

\par However, it will be more convenient for us to use the concept of  functional Galois
connections.

\begin{df}
Let $A$ and $Q$ be non-empty sets. A function  $f\in
\mathcal{O}(A)_{[n]}$ \emph{preserves} a set $H\subseteq A^Q$  if
for all $h_{0},h_{1}\ldots,h_{n-1}\in H$ the set~$H$ contains the
function $f(h_0,h_{1}\ldots, h_{n-1})$.
\par For any set $\mathcal{F}\subseteq \mathcal{O}(A)$ the set of all
sets $H\subseteq A^Q$ such that any function $f\in \mathcal{F}$
preserves $H$ is denoted by $\mathrm{Inv}_Q\, \mathcal{F}$. Any set
$H\in \mathrm{Inv}_Q\, \mathcal{F}$ is called a
\mbox{\emph{$Q$-invariant set}} of $\mathcal{F}$.
\par For any set
$\mathbb{H}\subseteq \mathscr P(A^Q)$ the set of all functions $f\in
\mathcal{O}(A)$ such that $f$ preserves any set $H\in \mathbb{H}$ is
denoted by $\mathrm{Pol}_Q\, \mathbb{D}$.
\end{df}

\begin{prop}\label{InvPolprop} For all sets $A\neq\emptyset$, $\mathcal{F}\subseteq \mathcal{O}(A)$,
$Q$, $H, H'\subseteq A^Q$, $P$ and function $f\colon P\to Q$
\begin{enumerate}
\item $H\in \mathrm{Inv}_{Q} \mathcal{F}\Rightarrow \{\,h(f)\colon h\in H\,\}\in \mathrm{Inv}_{P}
\mathcal{F}$,
\item $H\in \mathrm{Inv}_{Q} \mathcal{F}\Rightarrow H|_{P}\in \mathrm{Inv}_{P\cap Q}
\mathcal{F}$,
\item $H\in \mathrm{Inv}_{Q} \mathcal{F}\Rightarrow H|^{P}\in \mathrm{Inv}_{P\cup Q}
\mathcal{F}$,
\item $H,H'\in \mathrm{Inv}_{Q} \mathcal{F}\Rightarrow H\cap H'\in \mathrm{Inv}_{Q}
\mathcal{F}$,
\end{enumerate}
\end{prop}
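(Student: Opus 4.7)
\medskip

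\noindent\textbf{Plan of proof.} Each of the four items asserts closure under a construction, so the strategy is uniform: take an $n$-ary $g\in\mathcal{F}$ and a tuple of functions from the alleged $Q'$-invariant set, compute the composition pointwise, and recognise the result as an element of that set using the hypothesis $H\in\mathrm{Inv}_Q\mathcal{F}$. The arithmetic is routine; the only care needed is to track domains and to notice the functorial identity ``composition with $g$ commutes with precomposition and with restriction/extension''.

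For (1), the key identity is the pointwise computation
\[
g(h_0\circ f,\ldots,h_{n-1}\circ f)(p)=g(h_0(f(p)),\ldots,h_{n-1}(f(p)))=g(h_0,\ldots,h_{n-1})(f(p)),
\]
so $g(h_0\circ f,\ldots,h_{n-1}\circ f)=g(h_0,\ldots,h_{n-1})\circ f$; since $g(h_0,\ldots,h_{n-1})\in H$ by hypothesis, the left-hand side lies in $\{h\circ f:h\in H\}$. Item (2) is then immediate: take $f$ in~(1) to be the inclusion $P\cap Q\hookrightarrow Q$, which turns $h\circ f$ into $h|_P$. Item (4) is a one-line verification directly from the definition of $\mathrm{Inv}_Q$: if $h_0,\ldots,h_{n-1}\in H\cap H'$, then $g(h_0,\ldots,h_{n-1})$ belongs to both $H$ and $H'$, hence to their intersection.

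The substantive step is (3). Pick $f_0,\ldots,f_{n-1}\in H|^P$; by definition each $f_i$ extends some $h_i\in H$ and has domain $P\cup Q$, so $f_i|_Q=h_i$ and $g(f_0,\ldots,f_{n-1})$ is a function on $P\cup Q$. Restricting the composition to $Q$ gives $g(f_0,\ldots,f_{n-1})|_Q=g(f_0|_Q,\ldots,f_{n-1}|_Q)=g(h_0,\ldots,h_{n-1})$, which lies in $H$ because $H\in\mathrm{Inv}_Q\mathcal{F}$. Hence $g(f_0,\ldots,f_{n-1})$ is an extension of an element of $H$ to the domain $P\cup Q$, i.e.\ an element of $H|^P$, as required.

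The only possible source of friction is bookkeeping of domains in~(3), because the definition of $h|^P$ allows $f\in A^{P\cup\mathrm{dom}\, h}$ for variable $\mathrm{dom}\, h$; here, however, all $h\in H$ have the same domain $Q$, so all elements of $H|^P$ sit in $A^{P\cup Q}$ and the calculation goes through uniformly. No deeper idea is needed.
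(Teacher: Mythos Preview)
The proposal is correct and is precisely the direct verification the paper alludes to; the paper's own proof is the single line ``By a direct verification.'' Your pointwise identities for items (1)--(4), including the reduction of (2) to (1) via the inclusion $P\cap Q\hookrightarrow Q$ and the domain bookkeeping in (3), are exactly what that phrase unpacks to.
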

\begin{proof} By a direct verification.
\end{proof}

\par There is a simple relationship between these two preservation
relations. Consider an $m$-ary predicate $P$ over $A$ as a set of
functions $\mathbf{a}\colon m\to A$. It is easy to verify that for
any function \mbox{$f\in \mathcal{O}(A)$}, $P\in
\mathrm{Inv}\,\{\,f\,\}$ if and only if $P\in
\mathrm{Inv}_m\,\{\,f\,\}$. On the other hand, suppose that $Q$ is a
finite set of cardinality $m$, and let some numbering $Q=\{\,q_0,
q_1, \ldots, q_{m-1}\,\}$ be fixed. It easy to check that a function
\mbox{$f\in \mathcal{O}(A)$} preserves a set $H\subseteq A^Q$ if and
only if $f$ preserves the predicate $P=\{\,h(q_0)h(q_1)\ldots
h(q_{m-1})\colon  h\in H\,\}$. These arguments allow us immediately
to obtain some statements for $(\mathrm{Inv}_Q, \mathrm{Pol}_Q)$
connection. In particular, the following proposition holds.

\begin{prop}
For any non-empty sets $A$ and $Q$ the pair $(\mathrm{Inv}_Q,
\mathrm{Pol}_Q)$ is a Galois connection between the Boolean latices
$\mathscr P(\mathscr P(A^Q))$ and $\mathscr P (\mathcal{O}(A))$. Any
Galois closed set $\mathcal{F}\in \mathcal{O}(A)$ is a clone.
\end{prop}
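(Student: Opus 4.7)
The plan is to establish the two claims separately. For the Galois-connection assertion, the cleanest route is to observe that the pair $(\mathrm{Inv}_Q,\mathrm{Pol}_Q)$ is nothing but the pair induced, in the standard way, by the binary relation $\pi\subseteq\mathcal{O}(A)\times\mathscr P(A^Q)$ defined by $f\,\pi\,H$ iff $f$ preserves $H$. Any binary relation $\pi\subseteq X\times Y$ gives rise to a Galois connection between $\mathscr P(X)$ and $\mathscr P(Y)$ via the maps $S\mapsto\{y\in Y:\forall x\in S,\ x\,\pi\,y\}$ and its symmetric counterpart, so the four defining conditions (antitonicity in each argument plus the two extensivity inclusions $\mathcal{F}\subseteq\mathrm{Pol}_Q\mathrm{Inv}_Q\,\mathcal{F}$ and $\mathbb{H}\subseteq\mathrm{Inv}_Q\mathrm{Pol}_Q\,\mathbb{H}$) reduce to a purely formal manipulation of quantifiers and require no information about $\pi$ itself. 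By the very definitions of $\mathrm{Inv}_Q$ and $\mathrm{Pol}_Q$, they coincide with the two maps of the Galois connection induced by $\pi$, and the first claim follows.

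For the second assertion, I would fix a Galois-closed set $\mathcal{F}=\mathrm{Pol}_Q\,\mathbb{H}$ and verify the two clone axioms directly. Projections pose no difficulty: for any $H\subseteq A^Q$ and any $h_0,\dots,h_{n-1}\in H$ the composition formula gives $e^n_i(h_0,\dots,h_{n-1})(q)=e^n_i(h_0(q)h_1(q)\cdots h_{n-1}(q))=h_i(q)$, so $e^n_i(h_0,\dots,h_{n-1})=h_i\in H$; hence every projection preserves every $H\subseteq A^Q$ and therefore lies in $\mathcal{F}$. For closure under composition, I would use the standard two-step argument: given $f\in\mathcal{F}_{[n]}$ and $g_0,\dots,g_{n-1}\in\mathcal{F}_{[m]}$, any $H\in\mathbb{H}$, and any $h_0,\dots,h_{m-1}\in H$, first apply the preservation property of each $g_i$ to obtain $g_i(h_0,\dots,h_{m-1})\in H$, then apply preservation of $H$ by $f$ to the resulting $n$-tuple of elements of $H$, which yields the desired conclusion since by definition this $n$-tuple is exactly $\bigl(f(g_0,\dots,g_{n-1})\bigr)(h_0,\dots,h_{m-1})$ evaluated pointwise.

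I do not anticipate any genuine obstacle. The only point requiring care is the dual use of the notation $f(h_0,\dots,h_{n-1})$ flagged explicitly by the author in the preamble: one must consistently read it as pointwise composition whenever the $h_i$ lie in $A^Q$, so that the computations above really do take place inside $A^Q$ rather than inside $A$. For finite $Q$ both assertions could alternatively be deduced from the corresponding classical results for $(\mathrm{Inv},\mathrm{Pol})$ by transporting them along the encoding $H\leftrightarrow\{h(q_0)h(q_1)\cdots h(q_{m-1}):h\in H\}$ sketched immediately before the proposition, but the direct verification above works uniformly in the cardinality of $Q$ and does not separate the finite and infinite cases.
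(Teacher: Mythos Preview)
Your proposal is correct. The paper gives no formal proof of this proposition; it merely states, in the paragraph immediately preceding it, that the result follows ``immediately'' from the classical $(\mathrm{Inv},\mathrm{Pol})$ theory via the bijection $H\leftrightarrow\{\,h(q_0)h(q_1)\cdots h(q_{m-1}):h\in H\,\}$ between subsets of $A^Q$ and $|Q|$-ary predicates. Your route is different: you derive the Galois-connection axioms from the general fact that any binary relation $\pi\subseteq X\times Y$ induces one, and you check the clone axioms for $\mathrm{Pol}_Q\,\mathbb H$ by a direct pointwise computation. This is more self-contained and, as you correctly observe, works uniformly for $Q$ of arbitrary cardinality, whereas the paper's encoding argument is literally only formulated for finite $Q$. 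Both approaches are standard and short; yours has the advantage of not importing the classical $(\mathrm{Inv},\mathrm{Pol})$ machinery at all.
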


\par Also note that for any clone $\mathcal{F}\subseteq
\mathcal{O}(A)$, $\mathcal{F}_{[n]}\in \mathrm{Inv}_{[A^n]}\,
\mathcal{F}$. It immediately follows that each clone $\mathcal{F}$
with a finite carrier is uniquely characterized by the set
$\mathrm{Inv}\, \mathcal{F}$. Thus, a family of Galois connections
$(\mathrm{Inv}_Q, \mathrm{Pol}_Q)$ unites the concepts of invariant
sets and functional closure. In some papers, other Galois
connections $(\mathrm{Inv}_Q, \mathrm{Pol}_Q)$ are considered. E.g.,
the case $Q=[A]^r$ is studied in \cite{Shelah2005},
\cite{Polyakov2014}.


\section{Decomposition theorems}
\par Now we will show that under certain conditions, the set $\mathrm{Inv}_Q \mathcal{F}$
is arranged quite simply. Let $A$, $Q$ be arbitrary sets, and let
$H\subseteq A^Q$ and $\mathscr R\subseteq \mathscr P(Q)$. The set
$$
H_{(\mathscr{R})}=\{\,(H|_R)|^Q\colon  R\in \mathscr R\,\}
$$
is called a \emph{decomposition} of $H$ over $\mathscr R$. It is
easy to verify that the following proposition is true.

\begin{prop}\label{decomp}
For all $H\subseteq A^Q$ and $\mathscr R\subseteq \mathscr P(Q)$
\begin{enumerate}
\item $H\subseteq \bigcap H_{(\mathscr {R})}$,
\item $\left(\bigcap  H_{(\mathscr R)}\right)|_R=H|_R$ for any set $R\in \mathscr
R$.
\end{enumerate}
\end{prop}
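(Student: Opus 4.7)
The plan is to unwind the definitions of $H|_R$ and of $(H|_R)|^Q$ and then verify each of the two inclusions directly; the whole content of the proposition is a reformulation of membership in a decomposition component. The key reformulation I would first isolate is the following: for any $R\subseteq Q$ and any $f\in A^Q$, one has $f\in (H|_R)|^Q$ if and only if $f|_R\in H|_R$. Indeed, by the definition of $H|^{(\cdot)}$ as a union, $f\in (H|_R)|^Q$ means that $f$ extends some element $g\in H|_R$; but every such $g$ has domain exactly $R$ (because $\mathscr R\subseteq\mathscr P(Q)$), so extending $g$ to $Q$ is the same as having $f|_R=g$.

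For item (1), I would take an arbitrary $h\in H$ and an arbitrary $R\in \mathscr R$. Then $h|_R\in H|_R$ by definition of $H|_R$, and $h$ itself is a function $Q\to A$ whose restriction to $R$ is $h|_R$; by the reformulation above, $h\in (H|_R)|^Q$. Since this holds for every $R\in \mathscr R$, we conclude $h\in\bigcap H_{(\mathscr R)}$, proving $H\subseteq\bigcap H_{(\mathscr R)}$.

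For item (2), one direction is free from item (1): restricting the inclusion $H\subseteq \bigcap H_{(\mathscr R)}$ to $R$ gives $H|_R\subseteq \bigl(\bigcap H_{(\mathscr R)}\bigr)|_R$. For the reverse direction, I would fix $R\in\mathscr R$ and pick any $f\in \bigcap H_{(\mathscr R)}$. In particular $f\in (H|_R)|^Q$, so the reformulation yields $f|_R\in H|_R$; since $f$ was arbitrary, $\bigl(\bigcap H_{(\mathscr R)}\bigr)|_R\subseteq H|_R$, and equality follows.

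The only potential obstacle is notational bookkeeping: the definition $H|^P=\bigcup_{h\in H}h|^P$ takes extensions relative to the individual domains $\mathrm{dom}\, h$, so one must observe, as I did above, that the elements of $H|_R$ all share the domain $R$ because $R\subseteq Q$. Once that is noted, the argument is a direct unfolding of definitions and no substantive difficulty remains.
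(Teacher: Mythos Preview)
Your proof is correct and is exactly the kind of direct verification the paper has in mind: the paper does not spell out a proof for this proposition at all, merely remarking beforehand that it is easy to verify. Your key reformulation $f\in (H|_R)|^Q \Longleftrightarrow f|_R\in H|_R$ (valid because $R\subseteq Q$, so every $g\in H|_R$ has domain exactly $R$) is precisely the observation needed, and both items then follow immediately as you wrote.
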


\begin{df}  A set $H\subseteq A^Q$ is \emph{decomposable} over $\mathscr
R$ if $H=\bigcap H_{(\mathscr R)}$.
\end{df}

We show that clones satisfying the conditions $\Delta^\partial$,
$\Delta^e_r$, and $\Delta^2$ defined below have $Q$-invariant sets
that are decomposable over non-trivial sets~$\mathscr R$.

\begin{df}
Let $\mathcal{F}$ be a clone with a carrier $A$ and $n$ a natural
number, $n\geq 2$. The clone $\mathcal{F}$ satisfies the condition
\begin{itemize}
\item $\Delta^{s}_{n}$ if there is a natural number $i<n$ such that
for all $\mathbf{a}\in A^{n}_{n}$ and $a\in \mathrm{ran}\,
\mathbf{a}$ there is a function $s\in \mathcal{F}_{[n]}$ for which
\[
s(\mathbf{a})=a\,\,\text{and}\,\,s(\mathbf{x})=x_i\,\,\text{for all
$\mathbf{x}=x_0x_1\ldots x_{n-1}\in A^{n}_{<n}$};
\]
\item $\Delta^{\partial}$ if for all $\mathbf{a}\in A^{3}_{3}$ and $a\in \mathrm{ran}\,
\mathbf{a}$ there is a function $\partial\in \mathcal{F}_{[3]}$ for
which
\[
\partial(\mathbf{a})=a\,\,\text{and}\,\, \partial(xxy)=\partial(xyx)=\partial(yxx)=x\,\,\text{for all $x,y\in
A$};
\]
\item $\Delta^{2}$ if for all $\mathbf{a}, \mathbf{b} \in A^{2}_{2}$ such that $\mathrm{ran}\, \mathbf{a}\neq \mathrm{ran}\, \mathbf{b}$, and for all $a\in \mathrm{ran}\,
\mathbf{a}$, and $b\in \mathrm{ran}\, \mathbf{b}$ there is a
function $w\in \mathcal{F}_{[2]}$ for which
\[
w(\mathbf{a})=a,\,\,w(\mathbf{b})=b\,\,
\text{and}\,\,w(xx)=x\,\,\text{for all $x\in A$}.
\]
\end{itemize}
\end{df}

\begin{df} Let $A$, $B\subseteq A$, $Q$, $H\subseteq A^Q$ be non-empty
sets and $n$ a natural number. We denote by
\begin{enumerate}
\item[(1)] $[Q]^{2,0}_{H}$ the set of all sets $P=\{\,p,q\,\}\in [Q]^2$
such that there is a permutation $\sigma\in S_A$ for which
$h(q)=\sigma (h(p))$ for all $h\in H$,
\item[(2)] $[Q]^{2,id}_{H}$ the set of all sets $P=\{\,p,q\,\}\in [Q]^2$
such that $h(q)=h(p)$ for all $h\in H$,
\item[(3)] $[Q]^{2,1}_{H}$ the set of all sets $P=\{\,p,q\,\}\in [Q]^2$
such that there are $a,b\in A$ for which $h(p)=a\vee h(q)=b$ for all
$h\in H$,
\item[(4)] $Q^{(n)}_{H}$ the set $\{\,q\in Q\colon  |H(q)|<n\,\}$,
\item[(5)] $Q^{[B]}_{H}$ the set $\{\,q\in Q\colon  |H(q)|\subseteq B\,\}$.
\end{enumerate}
\end{df}

\par We will use the following technical definition.

\begin{df}
Let $H\subseteq A^Q$, $p,q\in Q$ and $a\in H(p)$. We say that $H$
\emph{weakly separates} $p$ \emph{from} $q$ \emph{at the point} $a$
if $H$ contains functions $h_1$ and $h_2$ such that
\[
h_1(p)=h_2(p)=a\,\,\text{and}\,\,h_1(q)\neq h_2(q)
\]
If $H$ weakly separates $p$ \emph{from} $q$  or $q$ \emph{from} $p$
at least at one point, we will simply say that it \emph{weakly
separates} $p$ \emph{and} $q$.
\par We say that $H$ \emph{strongly
separates}  $p$ \emph{from} $q$ \emph{at the point} $a$ if for each
$b\in H(q)$, $H$ contains a function $h$ such that
\[
h(p)=a\,\,\text{and}\,\,h(q)=b.
\]
\end{df}

\par Any function $\partial \in \mathcal{O}(A)_{[3]}$ satisfying
$\partial(xxy)=\partial(xyx)=\partial(yxx)=x$ is called a
\emph{$\partial$-function} (the terms \emph{majority function} and
\emph{discriminator} are also often used). Any function $w \in
\mathcal{O}(A)$ satisfying $w(xx\ldots x)=x$ is called an
\emph{idempotent function}.

\begin{thm}\label{partial} Let $A$, $Q$ and $H\subseteq A^Q$ be non-empty finite sets. Let $\mathcal{F}$ be a clone with the carrier
$A$, and let at least one of the following two conditions hold:
\begin{enumerate}
\item[(a)] $\mathcal{F}$ satisfies $\Delta^{\partial}$,
\item[(b)] $\mathcal{F}$ contains a $\partial$-function, and $|H(q)|\leq
2$ for all $q\in Q$.
\end{enumerate}
\par Then $H\in\mathrm{Inv}_Q \mathcal{F}$ if and only if the
following conditions hold:
\begin{enumerate}
\item\label{partsareinv} $H|_{P}\in \mathrm{Inv}_P \mathcal{F}$ for all $P\in [Q]^1\cup [Q]^{2,0}_H\cup
[Q]^{2,1}_H$,
\item\label{decompparts} $H$ is decomposable over $[Q]^1\cup [Q]^{2,0}_{H}\cup [Q]^{2,1}_{H}$.
\end{enumerate}
\end{thm}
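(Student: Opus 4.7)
Suppose conditions 1 and 2 hold, and set $\mathscr{R}=[Q]^{1}\cup[Q]^{2,0}_{H}\cup[Q]^{2,1}_{H}$. For each $R\in\mathscr{R}$, condition 1 gives $H|_{R}\in\mathrm{Inv}_{R}\,\mathcal{F}$, and Proposition~\ref{InvPolprop}(3), applied in the situation $R\subseteq Q$, yields $(H|_{R})|^{Q}\in\mathrm{Inv}_{Q}\,\mathcal{F}$. Finiteness of $Q$ makes $\mathscr{R}$ finite, so iterating Proposition~\ref{InvPolprop}(4) gives $\bigcap H_{(\mathscr{R})}\in\mathrm{Inv}_{Q}\,\mathcal{F}$; by condition 2 this intersection equals $H$.

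\textbf{Necessity ($\Rightarrow$) and Baker--Pixley reduction.} Now suppose $H\in\mathrm{Inv}_{Q}\,\mathcal{F}$. Condition 1 is an instance of Proposition~\ref{InvPolprop}(2). Because $H\subseteq\bigcap H_{(\mathscr{R})}$ by Proposition~\ref{decomp}(1), condition 2 reduces to showing that any $h'\in A^{Q}$ with $h'|_{R}\in H|_{R}$ for every $R\in\mathscr{R}$ must lie in $H$. The plan is a Baker--Pixley induction on $|S|$ for $S\subseteq Q$, producing $h_{S}\in H$ with $h_{S}|_{S}=h'|_{S}$; at $S=Q$ this gives $h'\in H$. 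The inductive step from $n\geq 2$ to $n+1$ is the classical trick: for $S=\{s_{0},\dots,s_{n}\}$, the induction hypothesis furnishes $h_{i}\in H$ with $h_{i}|_{S\setminus\{s_{i}\}}=h'|_{S\setminus\{s_{i}\}}$ for $i=0,1,2$, and $\partial(h_{0},h_{1},h_{2})\in H$ then coincides with $h'$ on all of $S$ by the majority property of $\partial$. Everything thus reduces to the base case $h'|_{\{p,q\}}\in H|_{\{p,q\}}$ for every $\{p,q\}\in[Q]^{2}$, whereas the hypothesis only supplies this on pairs in $\mathscr{R}$.

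\textbf{Structural lemma (the main obstacle).} The hard part closes the gap with the claim: if $\{p,q\}\in[Q]^{2}\setminus([Q]^{2,0}_{H}\cup[Q]^{2,1}_{H})$, then $H|_{\{p,q\}}=H(p)\times H(q)$; given this, $h'(p)\in H(p)$ and $h'(q)\in H(q)$ (from condition 2 on singletons) force $h'|_{\{p,q\}}\in H|_{\{p,q\}}$ automatically. To prove the claim, set $R=H|_{\{p,q\}}$ and suppose $(a_{0},b_{0})\in H(p)\times H(q)\setminus R$; the key step is $|\{b:(a_{0},b)\in R\}|=1$. In case~(b), $|H(q)|\leq 2$ together with $b_{0}\notin\{b:(a_{0},b)\in R\}$ leaves at most one element, and $a_{0}\in H(p)$ forces exactly one. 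In case~(a), if instead there were distinct $b_{1},b_{2}\in H(q)\setminus\{b_{0}\}$ with $(a_{0},b_{i})\in R$, pick any $(a',b_{0})\in R$: by $\Delta^{\partial}$ applied to the distinct triple $b_{1},b_{2},b_{0}$ there is $\partial\in\mathcal{F}$ with $\partial(b_{1},b_{2},b_{0})=b_{0}$, and applying this $\partial$ componentwise to the rows $(a_{0},b_{1}),(a_{0},b_{2}),(a',b_{0})\in R$ produces $(a_{0},b_{0})\in R$, a contradiction. Symmetrically $|\{a:(a,b_{0})\in R\}|=1$. Applying this argument at every missing pair, each row $\{b:(a,b)\in R\}$ of $R$ is either a singleton or all of $H(q)$, and dually for columns; a short case analysis on the existence of ``full'' rows versus columns then leaves only three shapes for $R$: the graph of a bijection $H(p)\to H(q)$ (yielding $\{p,q\}\in[Q]^{2,0}_{H}$), a cross $R\subseteq(\{a\}\times H(q))\cup(H(p)\times\{b\})$ (yielding $\{p,q\}\in[Q]^{2,1}_{H}$), or the full product $H(p)\times H(q)$ — the conclusion desired.
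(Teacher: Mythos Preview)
Your proof is correct and follows essentially the same strategy as the paper: first classify the possible shapes of $H|_{\{p,q\}}$ (full product, graph of a bijection, or cross --- your ``singleton-or-full row/column'' dichotomy is exactly the contrapositive of the paper's Lemma~\ref{slabsil}, and the ensuing case analysis is the content of the paper's Lemma~\ref{otdel}), then run a Baker--Pixley majority induction to reduce decomposability to the two-element base case. The only cosmetic difference is that in the induction step you use three subsets $S\setminus\{s_i\}$ of size $n$, whereas the paper uses two such subsets together with one pair $\{p,q\}$ taken directly from $H^\ast$; both variants work.
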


\begin{proof}

\par If $|Q|=1$, the theorem is obvious. Assume $|Q|\geq 2$. In the \emph{if} direction
the theorem follows immediately from Proposition \ref{InvPolprop}.
Let us prove the \emph{only if} direction. Let $H\in
\mathrm{Inv}_Q\mathcal{F}$. Item \ref{partsareinv} again follows
from Proposition \ref{InvPolprop}. To prove item \ref{decompparts},
we first prove the following Lemmas. We assume that all the premises
of the theorem hold.

\begin{lm}\label{slabsil} Let $p,q\in Q$, $a\in H(p)$ and $H$
weakly separate $p$ from $q$ at the point $a$. Then $H$ strongly
separates $p$ from $q$ at the point $a$.
\end{lm}

\begin{proof} Obviously, the lemma is true if $H(q)\leq 2$. Suppose, on the
contrary, that $H(q)\geq 3$. Let $b\in H(q)$. Suppose that $H$ does
not contain a function $h$ such that $h(p)=a$ and $h(q)=b$. Choose
functions $h_0,h_1, h_2\in H$ and distinct elements $d,c\in A$ for
which
\[
h_0(p)=h_1(p)=a,\,h_0(q)=c,\, h_1(q)=d,\, h_2(q)=b.
\]
By the above supposition, $b\notin\{\,c,d\,\}$, so $cdb\in A^3_3$.
By the premises of the theorem there is a $\partial$-function
$\partial\in \mathcal{F}$ such that $\partial(cdb)=b$. Consider the
function $f=\partial(h_0, h_1, h_2)$. Since $H\in \mathrm{Inv}_Q
\mathcal{F}$ we have $f\in H$.  However, it is easy to calculate
that $f(p)=a$ and $f(q)=b$, a contradiction.
\end{proof}

\begin{lm}\label{otdel} Let $P=\{\,p,q\,\}\in [Q]^{2}$. Then one of the following three
cases holds:
\begin{enumerate}
\item\label{otdel1} $H|_P$ is the set of all functions $h\in A^Q$ such that $h(p)\in H(p)$ and $h(q)\in H(q)$,
\item\label{otdel2} $H|_P$ is the set of all functions $h\in A^Q$ such that $h(p)\in H(p)$ and $h(q)\in
H(q)$, and $h(q)=\sigma(h(p))$ for some $\sigma\in S_A$,
\item\label{otdel3} $H|_P$ is the set of all functions $h\in A^Q$ such that $h(p)\in H(p)$ and $h(q)\in
H(q)$, and \emph{(}$h(p)=a\vee h(q)=b$\emph{)} for some $a,b\in A$.
\end{enumerate}
\end{lm}

\begin{proof} Let $H$ do not weakly separate $p$ and $q$. Since $H$ does
not weakly separates $p$ from $q$, there is a function $\sigma\colon
H(p)\to H(q)$ such that $h(q)=\sigma(h(p))$ for all $h\in H$.
Obviously, $\sigma$ is a surjective function. Suppose that
$\sigma(a)=\sigma(b)$ for some distinct $a,b\in H(p)$. Choose a
function $h_0, h_1\in H$ such that $h_0(p)=a$ and $h_1(p)=b$. We
have $h_0(q)=h_1(q)$. This means that $H$ weakly separates $q$ from
$p$ at the point $h_1(q)$, a contradiction. Therefore, $\sigma$ is a
one-to-one mapping. We can extend $\sigma$ to some permutation of
$A$. So, we have the case~\ref{otdel2}.
\par Let $H$ weakly separate one of the elements $p$, $q$ from the other at least at two distinct points. Without loss of generality, we assume
that $H$ weakly separates $p$ from $q$ at least at two distinct
points. By Lemma \ref{slabsil},  $H$ strongly separates $p$ from $q$
at least at two distinct points.  So, for any $b\in H(q)$ there are
functions $h_0, h_1\in H$ such that $h_0(q)=h_1(q)=b$ and
$h_0(p)\neq h_1(p)$. Hence, $H$ weakly (and, by  Lemma
\ref{slabsil}, strongly) separates~$q$ from~$p$ at any point $b\in
H(q)$. We have the case~\ref{otdel1}.
\par Now let the two previous assumptions do not satisfied. Without loss of generality, we assume
that $H$ weakly separates $p$ from $q$ at the unique point~$a$. If
$|H(p)|=1$, by  Lemma \ref{slabsil}, we have case~\ref{otdel1}. Let
$|H(p)|\geq 2$, $a'$ be an arbitrary element of
$H(p)\setminus\{\,a\,\}$, and $h$ an arbitrary function in $H$ for
which $h(p)=a'$. By  Lemma \ref{slabsil} there is a function $h_0\in
H$ such that $h_0(p)=a$ and $h_0(q)=b$. So, $H$ weakly separates $q$
from $p$ at the point $b$. By assumption, there is at most one such
point $b\in H(q)$. We have $h(q)=b$ for any function $h$ satisfying
$h(p)\neq a$, the case~\ref{otdel3}.
\end{proof}

\par It follows from Lemma \ref{otdel} that it suffices to prove that $H$ is decomposable over $[Q]^2$.
Define
\[
H^\ast=\bigcap_{P\in[Q]^2}(H|_P)|^Q.
\]
By Proposition \ref{decomp}, $H\subseteq H^\ast$. Therefore, it
suffices to prove the reverse inclusion.
\par We prove that for any set $Q'\subseteq Q$ and function $f\in
H^\ast$ there exists a function $h\in H$ such that
$f|_{Q'}=h|_{Q'}$. By induction on the cardinality of $Q'$. If
$|Q'|=1$, this follows from Proposition \ref{decomp}.
\par Let $|Q'|\geq 2$ and $f\in H^\ast$. Choose two distinct $p,q\in Q'$. By the induction
hypothesis there are two functions $h_p, h_q\in H$ that coincide
with  $f$ on  \mbox{$Q'\setminus\{\,p\,\}$} and \mbox{$Q'\setminus
\{\,q\,\}$}, respectively. In addition, by definition of $H^\ast$,
there is a function $h_{pq}\in H$ that coincides with  $f$ on
$\{\,p,q\,\}$, i.e. $h_{pq}(p)=f(p)$ and $h_{pq}(q)=f(q)$. Choose an
arbitrary \mbox{$\partial$-function} $\partial\in \mathcal{F}$, and
consider the function $h=\partial(f_p,f_q,f_{p,q})$. We have $h\in
H$ because $H\in \mathrm{Inv}_Q \mathcal{F}$. Moreover, the
following equalities are true:
\begin{align*}
&h(p)=\partial(f_p(p)f_q(p)f_{p,q}(p))=\partial(f_p(p)f(p)f(p))=f(p),\\
&h(q)=\partial(f_p(q)f_q(q)f_{p,q}(q))=\partial(f(q)f_q(q)f(q))=f(q),\\
&h(x)=\partial(f_p(x)f_q(x)f_{p,q}(x))=\partial(f(x)f(x)f_{p,q}(x))=f(x)
\end{align*}
for all $x\in Q'\setminus\{\,p,q\,\}$. The induction step is proved.

\end{proof}

\begin{thm}\label{s3} Let $A$, $Q$ and $H\subseteq A^Q$ be non-empty finite sets and $n$ a natural number, $n\geq 3$. Let $\mathcal{F}$ be
a clone with the carrier $A$ satisfying $\Delta^{s}_{n}$.

\par Then $H\in\mathrm{Inv}_Q \mathcal{F}$ if and only if
\begin{enumerate}
\item\label{partsareinv2} $H|_{P}\in \mathrm{Inv}_P \mathcal{F}$ for all $P\in [Q]^1\cup [Q]^{2,0}_H\cup
\{\,Q^{(n)}_H\,\}$,
\item\label{decompparts2} $H$ is decomposable over $[Q]^1\cup [Q]^{2,0}_{H}\cup \{\,Q^{(n)}_H\,\}$.
\end{enumerate}

\end{thm}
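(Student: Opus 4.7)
The plan is to adapt the strategy of Theorem~\ref{partial} to the $\Delta^{s}_{n}$ setting. The if direction is immediate from Proposition~\ref{InvPolprop}, and condition~(1) in the only if direction follows from the same proposition. Writing $H^{\ast} := \bigcap H_{(\mathscr R)}$ for $\mathscr R = [Q]^{1} \cup [Q]^{2,0}_{H} \cup \{Q^{(n)}_{H}\}$, Proposition~\ref{decomp} gives $H \subseteq H^{\ast}$, so the heart of the argument is the reverse inclusion $H^{\ast} \subseteq H$.

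First I would prove two structural lemmas modelled on Lemmas~\ref{slabsil} and~\ref{otdel} but now using $\Delta^{s}_{n}$. The first says that if $|H(q)| \geq n$ and $H$ weakly separates $p$ from $q$ at $a$, then $H$ strongly separates $p$ from $q$ at $a$. Given weak-separation witnesses $h_{0}, h_{1} \in H$ with $h_{0}(p) = h_{1}(p) = a$ and $h_{0}(q) \neq h_{1}(q)$, and a target $b \in H(q)$, one picks $n$ functions in $H$ whose $q$-values form an injective tuple containing $b$, placing $h_{0}$ at the index $i$ from $\Delta^{s}_{n}$ and $h_{1}$ at another position so that the $p$-tuple has the value $a$ at least twice and is therefore non-injective; the function from $\Delta^{s}_{n}$ chosen to output $b$ on that injective $q$-tuple then produces the required $h$. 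The second lemma classifies $H|_{\{p,q\}}$: for $p, q \in Q \setminus Q^{(n)}_{H}$ it is either the full product $H(p) \times H(q)$ or the graph of a permutation $\sigma \in S_{A}$, the disjunctive ``case~3'' of Lemma~\ref{otdel} being ruled out by an explicit $s$-construction producing a function violating $h(p) = a \vee h(q) = b$; and for $p \in Q^{(n)}_{H}$, $q \in Q \setminus Q^{(n)}_{H}$ it is the full product, since the permutation alternative would require $|H(p)| = |H(q)|$ and contradict $|H(p)| < n \leq |H(q)|$.

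To show $H^{\ast} \subseteq H$ I would induct on $|Q \setminus Q^{(n)}_{H}|$. The base case is trivial since then $H^{\ast} = (H|_{Q^{(n)}_{H}})|^{Q} = H$. In the inductive step, pick $q^{\ast} \in Q \setminus Q^{(n)}_{H}$ and apply the induction hypothesis to $\tilde{H} := H|_{Q \setminus \{q^{\ast}\}}$ (invariant by Proposition~\ref{InvPolprop}), yielding $h_{0} \in H$ with $h_{0}|_{Q \setminus \{q^{\ast}\}} = f|_{Q \setminus \{q^{\ast}\}}$. If $h_{0}(q^{\ast}) = f(q^{\ast})$ we are done. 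Otherwise, if $\{q', q^{\ast}\} \in [Q]^{2,0}_{H}$ for some $q' \in Q \setminus \{q^{\ast}\}$, then $h_{0}(q^{\ast}) = \sigma(h_{0}(q')) = \sigma(f(q')) = f(q^{\ast})$ already, using $f \in H^{\ast}$, a contradiction; so we may assume $q^{\ast}$ has no permutation partner. Then every pair $\{q, q^{\ast}\}$ is full product by the second lemma, and a multi-point version of the first lemma reduces the task to exhibiting $h_{1} \in H$ with $h_{1}|_{Q \setminus \{q^{\ast}\}} = f|_{Q \setminus \{q^{\ast}\}}$ and $h_{1}(q^{\ast}) \neq h_{0}(q^{\ast})$.

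The main obstacle is producing this $h_{1}$ in the no-permutation-partner case. If the fiber $\{h(q^{\ast}) : h \in H,\, h|_{Q \setminus \{q^{\ast}\}} = f|_{Q \setminus \{q^{\ast}\}}\}$ were a singleton, and similarly at every $\tilde g \in \tilde{H}$, the $q^{\ast}$-coordinate would define a function $\pi\colon \tilde{H} \to H(q^{\ast})$ commuting with every operation in $\mathcal{F}$. By choosing $n$ elements of $\tilde{H}$ whose $\pi$-values are all distinct while their $\tilde Q$-tuples are non-injective at every coordinate—arranged using the pairwise independence of $q^{\ast}$ from its non-partners—the commutation identity with any $s$ from $\Delta^{s}_{n}$ would force $s(c_{1}, \ldots, c_{n}) = c_{i}$ on that injective $\pi$-tuple, contradicting the free choice of $s$ guaranteed by $\Delta^{s}_{n}$. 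Pushing this further shows that $\pi$ must factor through a single coordinate $q \in Q \setminus \{q^{\ast}\}$, making $\{q, q^{\ast}\}$ a permutation pair and contradicting the case hypothesis. This is where $\Delta^{s}_{n}$ does the essential work played by the $\partial$-majority identity in the proof of Theorem~\ref{partial}.
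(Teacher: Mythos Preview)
Your two structural lemmas are essentially the paper's Lemmas~\ref{alli}, \ref{slabsil2} and~\ref{otdele}, and the overall shape (if direction via Proposition~\ref{InvPolprop}, then lemmas, then an induction) is right. The difficulty is in your induction and its final step.

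You induct on $|Q\setminus Q^{(n)}_H|$, removing a single coordinate $q^\ast$ and obtaining one helper $h_0\in H$ with $h_0|_{\tilde Q}=f|_{\tilde Q}$. To finish you must correct $h_0(q^\ast)$, and here your argument is incomplete. You assume for contradiction that \emph{every} fibre $\{h(q^\ast): h|_{\tilde Q}=\tilde g\}$ is a singleton, not merely the one over $f|_{\tilde Q}$; you never explain why ``some fibre has two elements'' already lets you hit $f$ (it does, but it needs the construction you only allude to). More seriously, in the all-singletons case your plan is to pick $n$ elements of $H$ with pairwise distinct $q^\ast$-values whose $\tilde Q$-tuples are non-injective at \emph{every} coordinate. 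When $\tilde Q$ still contains points $q$ with $|H(q)|\geq n$ (i.e.\ whenever $|Q\setminus Q^{(n)}_H|\geq 2$) pairwise full-product information about $\{q,q^\ast\}$ does not give you simultaneous control over all coordinates of $\tilde Q$, so the existence of such a tuple is not justified. The subsequent assertion that ``$\pi$ must factor through a single coordinate'' is not argued at all; this is the step on which the whole contradiction rests.

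The paper avoids this by inducting on $|Q'|$ over subsets $Q'\subseteq Q$, exactly as in Theorem~\ref{partial}. In the inductive step one chooses $p,q\in Q'$ with $|H(q)|\geq n$ and obtains \emph{two} helpers $h_p,h_q\in H$ from the hypothesis applied to $Q'\setminus\{p\}$ and $Q'\setminus\{q\}$. After ruling out the permutation case of Lemma~\ref{otdele} at $\{p,q\}$, one picks $h_2,\dots,h_{n-1}\in H$ with $h_i(p)=f(p)$ and with $\mathbf b=h_q(q)h_p(q)h_2(q)\cdots h_{n-1}(q)$ repetition-free, then applies a single $t$ from Lemma~\ref{alli} with $t|_{A^n_{<n}}=e^n_0$ and $t(\mathbf b)=f(q)$. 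The point is that at every $x\in Q'\setminus\{p,q\}$ the first two entries $h_q(x)=h_p(x)=f(x)$ already coincide, so non-injectivity there is automatic; no fibre analysis or factorisation argument is needed. Replacing your induction variable by $|Q'|$ and using two removals instead of one closes the gap.
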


\begin{proof}

\par If $|Q|=1$, the theorem is obvious. Assume $|Q|\geq 2$. In the \emph{if} direction
the theorem follows immediately from Proposition \ref{InvPolprop}.
Let us prove  the \emph{only if} direction. Let $H\in
\mathrm{Inv}_Q\mathcal{F}$. Item \ref{partsareinv2} again follows
from Proposition \ref{InvPolprop}. To prove item \ref{decompparts2},
we first prove the following Lemmas. We assume that all the premises
of the theorem hold.

\begin{lm}\label{alli}
Let $j<n$, $\mathbf{b}\in A^{n}_{n}$ and $b\in \mathrm{ran}\,
\mathbf{b}$. Then there is a function \mbox{$t\in \mathcal{F}$} for
which $t(\mathbf{b})=b$ and $t(\mathbf{x})=x_j$ for all
$\mathbf{x}=x_0x_1\ldots x_{n-1}\in A^{n}_{<n}$.
\end{lm}

\begin{proof}
By condition $\Delta^{s}_{n}$ we have that there is a natural number
$i<n$ such that for all $\mathbf{a}\in A^{n}_{n}$ and $a\in
\mathrm{ran}\, \mathbf{a}$ there is a function $s\in
\mathcal{F}_{[n]}$ for which $s(\mathbf{a})=a$ and
$s(\mathbf{x})=x_i$ for all $\mathbf{x}=x_0x_1\ldots x_{n-1}\in
A^{n}_{<n}$. If $j=i$, the lemma is proved.  Let $j\neq i$, and
$\tau$ be the transposition $(i,j)\in S_{n}$. Choose a function
$s\in \mathcal{F}_{[n]}$ for which $s(\mathbf{b}(\tau))=b$ and
$s(\mathbf{x})=x_i$ for all $\mathbf{x}=A^{n}_{<n}$. It is easy to
see that we can put $t=s(x_{\tau(0)},x_{\tau(1)},\ldots,
x_{\tau(n-1)})$.
\end{proof}

\begin{lm}\label{slabsil2} Let $p,q\in Q$, $a\in H(p)$, $|H(q)|\geq n$ and $H$
weakly separate $p$ from $q$ at the point $a$. Then $H$ strongly
separates $p$ from $q$ at the point $a$.
\end{lm}

\begin{proof} For an arbitrary element $b\in H(q)$, assume that
$H$ does not contain a function $h$ such that $h(p)=a$ and $h(q)=b$.
Choose a functions $h_0, h_1, h_2\in H$, and distinct elements
$c,d\in A$ such that
\[
h_0(p)=h_1(p)=a,\,h_0(q)=c,\, h_1(q)=d,\, h_2(q)=b.
\]
By the above assumption, $b\notin\{\,c,d\,\}$. Using the inequality
$|H(q)|\geq n$, choose $n-3$ functions $h_3,h_4,\ldots,h_{n-1}$ from
$H$ such that $\mathbf{b}=h_0(q)h_1(q)h_2(q)\ldots h_{n-1}(q)$ is a
repetition-free sequence. The sequence
$\mathbf{a}=h_0(p)h_1(p)h_2(p)\ldots h_{n-1}(p)$ belongs
to~$A^{n}_{<n}$. By Lemma \ref{alli}, there exists a function $t\in
\mathcal{F}$ for which $t(\mathbf{a})=h_0(p)=a$ и
$t(\mathbf{b})=h_2(q)=b$. Consider the function $h=t(h_0,h_1,\ldots,
h_{n-1})$. Since $H\in \mathrm{Inv}_Q \mathcal{F}$ we have $h\in H$.
However, it is easy to calculate that $h(p)=t(\mathbf{a})=a$ и
$h(q)=t(\mathbf{b})=b$; a contradiction.
\end{proof}

\begin{lm}\label{otdele} Let $P=\{\,p,q\,\}\in [Q]^{2}$, $P\nsubseteq Q^{(n)}_H$. Then one of the following
two cases holds:
\begin{enumerate}
\item\label{otdele1} $H|_P$ is the set of all functions $h\in A^P$ such that $h(p)\in H(p)$ and $h(q)\in H(q)$,
\item\label{otdele2} $H|_P$ is the set of all functions $h\in A^P$ such that $h(p)\in H(p)$ and $h(q)\in
H(q)$, and $h(q)=\sigma(h(p))$ for some $\sigma\in S_A$.
\end{enumerate}
\end{lm}

\begin{proof} If $H$ does not weakly separate $p$ and $q$ then we have the
case \ref{otdele2}, which can be proved in the same way as in
Theorem \ref{partial}.
\par Otherwise, we show that the case \ref{otdele1} holds.
Without loss of generality, we assume that $|H(p)|\leq |H(q)|$.
Therefore, we have $|H(q)|\geq n$. We show that without loss of
generality we can assume that $H$ weakly separates $p$ from $q$. In
fact, otherwise there exists a surjective function $\sigma\colon
H(p)\to H(q)$. Hence, $|H(p)|=|H(q)|$, and $p$ and~$q$ can be
interchanged if necessary.
\par Let $H$ weakly separate $p$ from $q$ at the point $a\in
H(p)$. By Lemma~\ref{slabsil2}, it suffices to show that $H$ weakly
separates $p$ from $q$ at each point $a'\in H(p)\setminus\{\,a\,\}$.
\par Choose an arbitrary element $a'\in H(p)\setminus\{\,a\,\}$. Using Lemma \ref{slabsil2}, choose
some functions $h_0,h_1,\ldots,h_{n-1}\in H$ such that $h_0(p)=a'$,
$h_1(p)=h_2(p)=\ldots=h_{n-1}(p)=a$, and
$\mathbf{b}=h_0(q)h_1(q)h_2(q)\ldots h_{n-1}(q)$ is a
repetition-free sequence. Since the sequence
$\mathbf{a}=h_0(p)h_1(p)h_2(p)\ldots h_{n-1}(p)=a'a\ldots a$ belongs
to $A^{n}_{< n}$, it follows from Lemma \ref{alli} that
$\mathcal{F}$ contains a function $t$ such that $t(\mathbf{a})=a'$
and $t(\mathbf{b})\neq h_{0}(q)$. Then the values of the functions
$h_0$ and $h= t(h_0,h_1,h_2,\ldots, h_{n-1})$ coincide (and are
equal to $a'$) on $p$ and are different on $q$. The function $h$
belongs to $H$ because $H\in \mathrm{Inv}_Q \mathcal{F}$. So, $H$
weakly separates $p$ from $q$ at the point $a'$.
\end{proof}

\par It follows from Lemma \ref{otdele} that it suffices to prove that $H$ is decomposable over $[Q]^2\cup \{\,Q^{(n)}_H\,\}$.
Define
\[
H^\ast=(H|_{Q^{(n)}_H})|^Q\cap \bigcap_{P\in[Q]^2}(H|_P)|^Q.
\]
By Proposition \ref{decomp}, $H\subseteq H^\ast$. Therefore, to
prove the second part of the theorem it suffices to prove the
reverse inclusion.

\par We prove that for any set $Q'\subseteq Q$ and function $f\in
H^\ast$ there exists a function $h\in H$ such that
$f|_{Q'}=h|_{Q'}$. By induction on the cardinality of $Q'$. If
$Q'\subseteq Q^{(n)}_H$ or $|Q'|=1$, the statement follows from
Proposition \ref{decomp}.

\par Let $|Q'|\geq 3$, $Q'\nsubseteq Q^{(n)}_H$, and $f\in H^\ast$. Choose two distinct $p,q\in Q'$ such that $H(q)\geq n$. By the induction
hypothesis there are two functions $h_p, h_q\in H$ that coincide
with  $f$ on  $Q'\setminus\{\,p\,\}$ and $Q'\setminus \{\,q\,\}$,
respectively. In particulary,
\[
h_q(p)=f(p)\,\,\text{and}\,\,h_p(q)=f(q).
\]
\par If $h_{q}(q)=f(q)$, we put $h=f_{q}$.
\par Let $f_{q}(q)\neq h(q)$. Consider the set $H|_{P}$ where $P=\{\,p,q\,\}$. If the case \ref{otdele2} of the Theorem holds, we have
\[
h_{q}(q)=\sigma (h_q(p))=\sigma (f(p))=f(q),
\]
a contradiction.
\par Then the case \ref{otdele1} of the Theorem holds.  Choose $n-2$ functions $h_2$, $h_3$, $\ldots$,
$h_{n-1}$ from $H$ for which $h_2(p)=h_3(p)=\ldots=h_{n-1}(p)=f(p)$
and
\[
\mathbf{b}=h_q(q) h_p(q)h_2(q)h_3(q)\ldots h_{n-1}(q)
\]
is a repetition-free sequence. Using Lemma~\ref{alli} choose a
function $t\in \mathcal{F}$ such that $t(\mathbf{b})=h_p(q)=f(q)$
and $t(\mathbf{x})=x_0$ for all $\mathbf{x}=x_0x_1\ldots x_{n-1}\in
A^{n}_{<n}$. Consider the function $h=t(h_q, h_p, h_2, h_3,\ldots,
h_{n-1})$. We have $h\in H$ because $H\in \mathrm{Inv}_Q
\mathcal{F}$. Note that the sequence
\[
\mathbf{a}=h_q(p) h_p(p)h_2(p)h_3(p)\ldots h_{n-1}(p)
\]
belongs to $A^{n}_{<n}$. Moreover, for all $x\in
Q'\setminus\{\,p,q\,\}$ we have $h_p(x)=h_{q}(x)=f(x)$, and,
therefore,  the sequence
\[
h_q(x)h_p(x)h_2(x)h_3(x)\ldots h_{n-1}(x)
\]
also belongs to $A^{n}_{<n}$. Consequently,
\begin{align*}
&h(p)=t(\mathbf{a})=h_q(p)=f(p),\\
&h(q)=t(\mathbf{b})=h_p(q)=f(q),\\
&h(x)=t(h_q(x)h_p(x)h_2(x)h_3(x)\ldots h_{n-1}(x))=h_q(x)=f(x)
\end{align*}
for all $x\in Q'\setminus\{\,p,q\,\}$. The induction step is proved.
\end{proof}

\begin{thm}\label{d2} Let $A$, $Q$ and $H\subseteq A^Q$ be non-empty finite sets. Let $\mathcal{F}$ be a clone with the carrier $A$ satisfying
$\Delta^{2}$.
\par Then $H\in\mathrm{Inv}_Q \mathcal{F}$ if and only if
\begin{enumerate}
\item\label{partsareinv3} $H|_{P}\in \mathrm{Inv}_P \mathcal{F}$ for all $P\in [Q]^1\cup
\{\,Q^{[B]}_H\colon  B\in [A]^2\,\}$,
\item\label{decompparts3} $H$ is decomposable over $[Q]^1\cup [Q]^{2,id}_{H}\cup \{\,Q^{[B]}_H\colon  B\in [A]^2\,\}$.
\end{enumerate}
\end{thm}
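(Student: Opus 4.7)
The \emph{if} direction is immediate from items (2)--(4) of Proposition \ref{InvPolprop}, and item \ref{partsareinv3} of the \emph{only if} direction is immediate from Proposition \ref{InvPolprop}(2). For item \ref{decompparts3} the plan is to put
\[
\mathscr R = [Q]^1 \cup [Q]^{2,id}_H \cup \{\,Q^{[B]}_H : B\in [A]^2\,\},\qquad H^\ast = \bigcap H_{(\mathscr R)},
\]
observe that $H\subseteq H^\ast$ by Proposition \ref{decomp}(1), and prove the reverse inclusion $H^\ast\subseteq H$. In direct parallel with Theorems \ref{partial} and \ref{s3}, this will be done by induction on $|Q'|$: every $f\in H^\ast$ agrees on each $Q'\subseteq Q$ with some $h\in H$.

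Before running the induction I would establish two auxiliary lemmas. A \emph{separation lemma} in the spirit of Lemmas \ref{slabsil} and \ref{slabsil2}: if $H$ weakly separates $p$ from $q$ at a point $a\in H(p)$ and the values of $H$-functions at $p$ and $q$ do not jointly fit inside a single $2$-element subset of $A$, then $H$ strongly separates $p$ from $q$ at $a$; the argument composes two chosen $H$-witnesses through an idempotent $w\in \mathcal{F}_{[2]}$ delivered by $\Delta^2$ applied to two pairs from $A^2_2$ of different ranges. A \emph{classification lemma} analogous to Lemmas \ref{otdel} and \ref{otdele}: for $P=\{p,q\}\in [Q]^2$ with $P\nsubseteq Q^{[B]}_H$ for every $B\in [A]^2$, exactly one of the following holds---$P\in[Q]^{2,id}_H$; or $H$ fails to weakly separate $p,q$ and $h(q)=\sigma(h(p))$ for all $h\in H$ and some $\sigma\in S_A$; or $H|_P=\{\,h\in A^P : h(p)\in H(p),\,h(q)\in H(q)\,\}$.

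For the induction, the base cases $|Q'|=1$, $Q'\in [Q]^{2,id}_H$, and $Q'\subseteq Q^{[B]}_H$ for some $B\in [A]^2$ follow directly from the definition of $H^\ast$. In the inductive step I pick two distinct $p,q\in Q'$ that are not jointly inside any single $R\in\mathscr R$, use the induction hypothesis to fix $h_p,h_q\in H$ matching $f$ on $Q'\setminus\{p\}$ and $Q'\setminus\{q\}$ respectively, set $\mathbf{a}=(h_p(p),h_q(p))$ and $\mathbf{b}=(h_p(q),h_q(q))$, and seek an idempotent $w\in\mathcal{F}_{[2]}$ with $w(\mathbf{a})=f(p)$ and $w(\mathbf{b})=f(q)$; then $h=w(h_p,h_q)\in H$ matches $f$ on $Q'$, since idempotency handles the remaining coordinates automatically. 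When $\mathbf{a},\mathbf{b}\in A^2_2$ have distinct ranges, $\Delta^2$ yields such a $w$ immediately.

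The main obstacle is the residual case where $\mathbf{a},\mathbf{b}\in A^2_2$ share a common range $B\in[A]^2$, so that all four values $h_p(p),h_q(p),h_p(q),h_q(q)$ lie in $B$ and $\Delta^2$ does not apply directly. My plan is to use the classification lemma to \emph{replace} $h_p$ or $h_q$ by another $H$-function that still agrees with $f$ on its respective subset but has a value outside $B$ at the contested coordinate, thereby breaking the range coincidence and reducing to the previous case. If no such replacement exists, the classification lemma forces $\{p,q\}\subseteq Q^{[B]}_H$, in which case the base case for the $Q^{[B]}_H$-component of $H^\ast$ already supplies $h$. Steering this dichotomy cleanly through the induction, together with the somewhat delicate case analysis in the two lemmas, is the technical core of the proof.
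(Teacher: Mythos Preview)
Your overall architecture is right, but the inductive step has a genuine gap. You plan to build $h$ as $w(h_p,h_q)$ for a single idempotent $w\in\mathcal F_{[2]}$, and in the obstructed case (where $\mathbf a=(h_p(p),h_q(p))$ and $\mathbf b=(h_p(q),h_q(q))$ have the same two-element range $B$) you hope to ``replace'' $h_p$ or $h_q$ by an $H$-function that still agrees with $f$ on $Q'\setminus\{p\}$ (resp.\ $Q'\setminus\{q\}$) but takes a value outside $B$ at the contested coordinate. The induction hypothesis gives you no control over $h_p(p)$ or $h_q(q)$, and your classification lemma only describes $H|_{\{p,q\}}$---it says nothing about matching $f$ on all of $Q'\setminus\{p\}$. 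So the replacement step is not justified, and the dichotomy ``either a replacement exists or $\{p,q\}\subseteq Q^{[B]}_H$'' is not what you actually need. (Indeed the paper never states or uses a classification lemma for $\Delta^2$; for $\Delta^2$ the structure of $H|_{\{p,q\}}$ can be richer than the trichotomy you propose.)

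What the paper does instead is the missing idea. It first proves an auxiliary lemma (Lemma \ref{2razn}): if no $h_q$ already matches $f$ on $Q'$, then for every $q\in Q'$ and every $a\in H(q)$ the function obtained from $f$ by resetting $f(q):=a$ still lies in $H^\ast$ (this requires checking it against each component $Q^{[B]}_H$, using that $|H(q)|\le 2$ whenever $q\in Q^{[B]}_H$). Applying the induction hypothesis to these modified functions yields a whole family $h_{p,q,a}\in H$ that agree with $f$ on $Q'\setminus\{p,q\}$ and satisfy $h_{p,q,a}(q)=a$. The desired $h$ is then built as a \emph{nested} composition $w_0(h_q,w_1(h_p,h_{p,q,a}))$---three $H$-functions, not two---and the proof splits into several cases (ranges of the relevant pairs differ; $H(p)\neq H(q)$; all $H(p)$ equal with further subcases on whether $f$ repeats a value on $Q'$), some of which even feed back into an earlier subcase. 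Your two-function scheme cannot cover these cases; you need the bootstrap through $H^\ast$ that Lemma \ref{2razn} provides.
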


\begin{proof} As in the previous theorems, in the direction \emph{if}
the theorem follows immediately from Proposition \ref{InvPolprop}
(note that for any $P\in [Q]^{2,id}_{H}$ the set $H|_{P}$ is
preserved by any function $f\in \mathcal{O}(A)$). Let us prove the
theorem in the opposite direction. Let $H\in
\mathrm{Inv}_Q\mathcal{F}$. Item \ref{partsareinv3} again follows
from Proposition \ref{InvPolprop}.
\par Define
\[
H^\ast= \bigcap_{B\in [A]^2}(H|_{Q^{[B]}_H})|^{Q}\cap \bigcap_{P\in
[Q]^{2, id}_{H}} (H|_{P})|^{Q}\cap \bigcap_{q\in Q}
(H|_{\{\,q\,\}})|^{Q}.
\]
By Proposition \ref{decomp}, $H\subseteq H^\ast$. Therefore, to
prove the theorem it suffices to prove the reverse inclusion.
\par We prove that for any set $Q'\subseteq Q$ and function $f\in
H^\ast$ there exists a function $h\in H$ such that
$f|_{Q'}=h|_{Q'}$. Induction on the cardinality of $Q'$. If $|Q'|=1$
or $Q'\subseteq Q^{[B]}_H$ for some $B\in [A]^2$, the statement
follows from Proposition \ref{decomp}.
\par Let $|Q'|\geq 2$, $Q'\nsubseteq Q^{[B]}_H$ for all $B\in [A]^2$, and $f\in
H^\ast$. By the induction hypothesis, for any $q\in Q'$ there is a
function $h_{q}\in H$ which coincides with $f$ on
$Q'\setminus\{\,q\,\}$. Fixe some family $\{\,h_q\,\}_{q\in Q'}$ of
such functions.

\begin{lm}\label{2razn} At least one of the following conditions
holds:
\begin{enumerate}
\item\label{2razn1} There is $q\in Q'$ for which $h_q(q)=f(q)$.
\item\label{2razn2} $[Q]^{2,id}_{H}\cap[Q']^2= \emptyset$, and for all $q\in Q'$ and $a\in H(q)$
there is a function $f_{q,a}\in H^\ast$ satisfying
$$
f_{q,a}|_{Q'\setminus\{\,q\,\}}=h_q|_{Q'\setminus\{\,q\,\}}\,\text{and}\,\,f_{q,a}(q)=a.
$$
\end{enumerate}
\end{lm}

\begin{proof}
\par Let $q\in Q'$, $a\in H(q)$ and $f_{q,a}^-$ be a function from $Q'$ to $A$, satisfying
\[
f_{q,a}^-=h_q|_{Q'\setminus\{\,q\,\}}\,\text{and}\,\,f_{q,a}^-(q)=a.
\]

\par Let condition \ref{2razn1} do not hold. If
$[Q]^{2,id}_{H}\cap[Q']^2\neq \emptyset$, we have
$h_{q'}(q')=h_{q'}(p')=f(p')=f(q')$ for some distinct $p',q'\in Q'$,
a contradiction.

\par Hence, it suffices to show that $f_{q,a}^-$ belongs to each of
the sets $H|_{Q^{[B]}_H\cap Q'}$ where $B\in [A]^2$.
\par Suppose that there is $B\in [A]^2$ such that
\[
f_{q,a}^-\notin H|_{Q^{[B]}_H\cap Q'}.\eqno{(\ast)}
\]
This means that for all $g\in H$, if $g$ and $f_{q,a}^-$ coincide on
the set $(Q^{[B]}_H\cap Q')\setminus \{\,q\,\}$, then $g(q)\neq a$.
Recall that $h_q$ and $f_{q,a}^-$ coincide on
$Q'\setminus\{\,q\,\}$. Therefore, \mbox{$h_q(q)\neq a$}.
\par On the other hand, $f$ and $f_{q,a}^-$ coincide
on $Q'\setminus\{\,q\,\}$. Since $f\in H^\ast$, we have
\[
f|_{Q^{[B]}_H}\in H|_{Q^{[B]}_H}.
\]
If $q\notin Q^{[B]}_H$ or $f(q)=f_{q,a}^-(q)=a$ these conditions
contradict supposition~$(\ast)$. So, $q\in Q^{[B]}_H$, and,
consequently, $|H(q)|\leq 2$. Moreover, $f(q)\neq a$. Therefore, we
have $h_q(q)=f(q)$, a contradiction.
\end{proof}

\par We continue the proof of the induction step. If $h_q(q)=f(q)$ for some $q\in Q'$ we can put $h=h_q$. In the
sequel, we assume that
\[
h_q(q)\neq f(q)\eqno{(\ast\ast)}
\]
for all $q\in Q'$. Then by Lemma \ref{2razn} and the induction
hypothesis, for any distinct $p,q\in Q'$ and $a\in H(q)$ the set $H$
contains a function $h_{p,q,a}$ that coincides with $f_{q,a}$ on
$Q'\setminus \{\,p\,\}$. Thus, for all distinct $p,q\in Q'$ and
$a\in H(q)$ we have
\begin{align*}
&h_{p,q,a}|_{Q'\setminus \{\,p,q\,\}}=h_q|_{Q'\setminus
\{\,p,q\,\}}=f|_{Q'\setminus \{\,p,q\,\}}\\
&h_p(q)=f(q), h_q(p)=f(p)\\
&h_{p,q,a}(q)=a\\
&h_p, h_q, h_{p,q,a}\in H.
\end{align*}
\par \textbf{Case 1:} $\mathrm{ran}\, f(p)h_{p}(p)\neq \mathrm{ran}\, h_{q}(q)f(q)$ for some distinct $p, q\in Q'$.
\par Choose such $p$ and $q$. Using $\Delta^2$ choose an idempotent function $w_0\in
\mathcal{F}_{[2]}$ such that
\[
w_0(f(p) h_{p}(p))= f(p)\,\, \text{and}\,\, w_0(h_{q}(q) f(q))=
f(q).
\]
\par We can put \mbox{$h=w_0(h_{q},h_{p})$}.

\par \textbf{Case 2:} $H(p)\neq H(q)$ for some $p, q\in Q'$.
\par Choose such $p$ and $q$. Without loss of generality, assume $H(p)\setminus
H(q)\neq \emptyset$. Choose \mbox{$a\in H(p)\setminus H(q)$}.  Using
$\Delta^2$ choose idempotent functions $w_0,w_1\in
\mathcal{F}_{[2]}$ such that
\[
\begin{array}{ll}
  w_0(f(p)a)=f(p), & w_0(h_{q}(q)f(q))=f(q) \\
  w_1(h_p(p)a)=a, & w_1(f(q)h_{q,p,a}(q))=f(q). \\
\end{array}
\]
\par We can put $h=w_0(h_{q}, w_1(h_{p},h_{q,p,a}))$.

\par \textbf{Case 3:} Case 1 and Case 2 do not hold.
\par We have $H(p)=H(q)$ for all $p,q\in Q'$. Denote $C=H(p)$ for
some $p\in Q'$. If $|C|\leq 2$ we go back to the induction base.
Further, we assume that $|C|\geq 3$.
\par \textbf{Subcase 3.1:} there exist distinct $p,q\in Q'$ for which
$f(p)=f(q)$. Choose such $p$ and $q$ and denote $f(p)=a$. Since the
assumption $(\ast\ast)$ is true and the Case 1 does not hold, we
have $h_{p}(p)=h_{q}(q)=b$ for some $b\in A\setminus\{\,a\,\}$.
Choose $c\in C\setminus \{\,a,b\,\}$.  Using $\Delta^2$ choose
idempotent functions $w_0,w_1\in \mathcal{F}_{[2]}$ such that
\[
w_0(ac)=w_0(ba)=a,\,
w_1(bc)=c\,\,\text{and}\,\,w_1(af_{q,p,c}(q))=a.
\]
\par We can put $h=w_0(h_{q},w_1(h_{p},h_{q,p,c}))$.
\par \textbf{Subcase 3.2:} $f(p)\neq f(q)$ for all distinct $p,q\in
Q'$.
\par First, let $Q'$ contain only two elements $p$ and $q$. Denote
$f(p)=a$ and $f(q)=b$. Since the assumption $(\ast\ast)$ is true and
the Case 1 does not hold, we have $f_{q}(q)=a$ and $f_{p}(p)=b$.
\par Further, by the assumption $(\ast\ast)$ and Lemma \ref{2razn} we have that there is a function $h'\in H$ such that $h'(p)\neq
h'(q)$. Denote $h'(p)=c$ and $h'(q)=d$.
\par We show that we can choose $c\neq b$. Consider the set
\[
D=\{\,x\in C\colon \text{for all $h\in H$}\,\,h(p)=x\rightarrow
h(q)=x\,\}.
\]
It suffices to prove $|D|\leq 1$. Let $x\in D\setminus \{\,d\,\}$.
Choose a function $h''\in H$ such that $h''(p)=x$.  Using $\Delta^2$
choose an idempotent functions $w\in \mathcal{F}_{[2]}$ for which
$w(cx)=x$ and $w(dx)=d$. Then we have $h'''=w(h',h'')\in H$,
$h'''(p)=x$ and $h'''(q)=d$, a contradiction. So, $D\subseteq
\{\,d\,\}$.
\par Now using $\Delta^2$ choose idempotent
functions $w_0,w_1\in \mathcal{F}_{[2]}$ such that
\[
w_{0}(ac)=a,\,w_{0}(ab)=b,\, w_1(bc)=c,\, w_1(bd)=b.
\]
\par We can put $h=w_0(h_{q},w_1(h_{p},h'))$.
\par Now let $|Q'|\geq 3$. Choose pairwise different $p,q,r\in Q'$. Denote $f(p)=a$, $f(q)=b$ и $f(r)=c$.
By by the assumption $(\ast\ast)$ and Lemma \ref{2razn} the function
$f_{p,b}$ and $f_{q,c}$ belong to $H^\ast$. In addition, each of
them satisfies subcase 3.1. Consequently, there are functions
$h^{\ast}_{p,b}, h^{\ast}_{q,c}\in H$ which coincide with the
functions $f_{p,b}$ and $f_{q,c}$ on the whole set $Q'$,
respectively. Using $\Delta^2$ choose an idempotent function $w_0\in
\mathcal{F}_{[2]}$ for which
\[
w_{0}(ba)=a,\, w_{0}(bc)=b.
\]
\par We can put $h=w_{0}(h^{\ast}_{p,b},h^{\ast}_{q,c})$. The induction step is proved.
\end{proof}

\par\textbf{Remarks} Theorem \ref{partial} is close to the main result of paper \cite{Marchen1997}.
In addition, Theorems \ref{partial} and \ref{s3} in a particular
case are proved in \cite{Shelah2005}. In our proofs, we used some
ideas of \cite{Shelah2005}.

\section{Conservative clones}

In this section, we apply decomposition theorems to the
classification of symmetric conservative clones with a finite
carrier $A$.
\begin{df}
A function $f\in \mathcal{O}(A)$ is called \emph{conservative} if
$$
f(\textbf{a})\in \mathrm{ran}\, \mathbf{a}
$$
for each $\mathbf{a}\in \mathrm{dom}\, f$. .
\end{df}

Note that in terms of $(\mathrm{Inv}, \mathrm{Pol})$-connection a
conservative function can be characterized as a function preserving
any unary predicate.

\par It easy to check that the set of all conservative function $f\in
\mathcal{O}(A)$ is a clone. We denote this clone by the symbol
$\mathcal{C}(A)$. Any clone $\mathcal{F}\subseteq \mathcal{C}(A)$ is
called \emph{conservative}. The case $|A|=1$ is of no interest.
Therefore, we further believe that $|A|\geq 2$.

\par Now we introduce the concept of \emph{characteristic} of
conservative clone.

\begin{df}
Let $\mathcal{F}$ be a clone with a carrier $A$. Then
$\mathrm{r}(\mathcal{F})$ is the minimal natural number $r$ for
which there exists a $r$-ary function $f\in \mathcal{F}$ that is not
a projection. If $\mathcal{F}=\mathcal{E}(A)$ we put
$\mathrm{r}(\mathcal{F})=\omega$.
\end{df}

\par Obviously, if a clone $\mathcal{F}$ is conservative, then it does not
contain $0$-ary functions and any unary function $f\in\mathcal{F}$
is a projection. So, $\mathrm{r}(\mathcal{F})\geq 2$ for each
conservative clone $\mathcal{F}$.

\begin{df}
\emph{Let $\mathcal{F}$ be a clone with a carrier $A$. Then, for any
natural number $n$, $\mathrm{R}_n(\mathcal{F})$ is a binary relation
on~$A^n$ defined by
$$
\mathbf{a}\,\mathrm{R}_n(\mathcal{F})\,\mathbf{b} \leftrightarrow
(\exists \sigma \in S_A)(\forall f\in
\mathcal{F}_{[n]})\,f(\mathbf{b})=\sigma f(\mathbf{a}).
$$
We define
$\mathrm{R}(\mathcal{F})=\bigcup_{n<\omega}\mathrm{R}_n(\mathcal{F}).$}
\end{df}

Therefore, $\mathrm{R}(\mathcal{F})$ is a binary relation on
$A^{<\omega}$.

\begin{df}
Let $\mathcal{F}$ be a clone with a carrier $A$. Then, for any
natural number $n$, $\mathrm{D}_n(\mathcal{F})$ is a binary relation
on~$A^n$ defined by
\[
\mathbf{a}\,\mathrm{D}_n(\mathcal{F})\,\mathbf{b} \leftrightarrow
(\exists a,b\in a)(\forall f\in \mathcal{F}_{[n]})\,f(\mathbf{a})=
a\vee f(\mathbf{b})=b.
\]
We define
$\mathrm{D}(\mathcal{F})=\bigcup_{n<\omega}\mathrm{D}_n(\mathcal{F}).$
\end{df}

Therefore, $\mathrm{D}(\mathcal{F})$ is a binary relation on
$A^{<\omega}$.

\par Now let $\mathcal{F}$ is a conservative clone with a carrier $A$.
It easy to check that for any set $B\in [A]^2$ the set
$\mathcal{F}|_{B^{<\omega}}$ is a clone with the carrier $B$, and
there exists a natural isomorphism $(\sigma_B, \tau_B)$ from
$\mathcal{F}|_{B^{<\omega}}$ to some Post's class~$\Pi_B$ of Boolean
functions preserving $\mathbf{0}$ and $\mathbf{1}$ (A Boolean
function $f$ \emph{preserves $\mathbf{0}$ and $\mathbf{1}$} if
$f(0,0,\ldots, 0)=0$ and $f(1,1,\ldots,1)=1$, i.e. if $f$ is
conservative). Note that $\Pi_B$ is defined up to a natural
isomorphism of Post's classes. For definiteness, we choose some
maximal by inclusion set $\mathbb{P}$ of pairwise non natural
isomorphic Post's classes and assume that~$\Pi_B$ belongs to
$\mathbb{P}$ for any $B\in [A]^2$. In this case the class $\Pi_B$ is
uniquely determined for each $B\in [A]^2$. Also, in this case the
natural isomorphism $(\sigma_B, \tau_B)$ is uniquely determined if
$\Pi_B$ is not closed with respect to duality (Post's class $P$ is
closed with respect to duality if for all natural number $n$ and
$n$-ary function $f\in P$ the function
$f^\ast=\overline{f(\overline{x}_0, \overline{x}_1, \ldots,
\overline{x}_{n-1})}$ belongs to $P$, i.e. if $P$ is symmetric). If
$\Pi_B$ is closed with respect to duality, there are two distinct
natural isomorphisms from $\mathcal{F}|_{B^{<\omega}}$ to $\Pi_B$.
For definiteness, we assume that $(\sigma_B, \tau_B)$ is any one of
these two natural isomorphisms.

\begin{df} Let $\mathcal{F}$ be a conservative clone with a carrier
$A$. The family of natural isomorphisms $\{\,(\sigma_B,
\tau_B)\,\}_{B\in [A]^2}$  from $\mathcal{F}|_{B^{<\omega}}$ to
$\Pi_B$ is denoted by~$\Pi(\mathcal{F})$.
\end{df}

\begin{df}The quadruple $\chi(\mathcal{F})=(\mathrm{r}(\mathcal{F}),
\mathrm{R}(\mathcal{F}), \mathrm{D}(\mathcal{F}), \Pi(\mathcal{F}))$
is called the \emph{characteristic} of a conservative clone
$\mathcal{F}$.
\end{df}

\par We show that the characteristic $\chi(\mathcal{F})$ uniquely determines a conservative symmetric
clone $\mathcal{F}$ with a finite carrier. In addition to
decomposition theorems, Post's classification of closed classes of
Boolean functions is used in the following proofs. Post's
classification can be found in \cite{Post} or (in a more modern
version) in \cite{Lau}. We will not refer to these works every time
we use them. We need only the classification of closed classes of
Boolean functions that are closed with respect to duality and
consist of functions preserving $\mathbf{0}$ and $\mathbf{1}$.
Recall that there are only six such classes: $O_1$, $D_1$, $D_2$,
$L_4$, $A_4$ and $C_4$ (in Post's notation). Of these, the classes
$O_1$, $D_1$, $D_2$, and  $L_4$ consist of self-dual functions. They
are generated by the functions $x$,
$\overline{x}y\vee\overline{x}z\vee yz$, $xy\vee yz\vee xz$ и
$x\oplus y \oplus z$, respectively. Moreover, $L_4\cup D_2\subseteq
D_1$.
\par We begin with the following lemma, which concerns not only symmetric clones.

\begin{lm}\label{rparametr} Let $|A|\geq 2$. Let $\mathcal{F}\subseteq \mathcal{O}(A)$ be a conservative clone, and $\mathrm{r}(\mathcal{F})=r\geq 3$.
Then there is Post's class $P\in \{\,O_1, D_1, D_2, L_4 \,\}$ and a
surjective mapping $\tau\colon \mathcal{F}\to P$ such that for each
set $B\in [A]^2$, one-to-one mapping $\sigma\colon B\to 2$, natural
number $n$, function $f\in \mathcal{F}_{[n]}$ and $n$-tuple
$\mathbf{a}\in B^n$,
$$
f(\mathbf{a})=\sigma^{-1}(\tau(f)(\sigma(\mathbf{a}))).
$$
 If
$r\geq 4$ then $P=O_1$ and  any $n$-ary function $f\in \mathcal{F}$
coincides with some projection on a set $A^{n}_{<r}$.
\end{lm}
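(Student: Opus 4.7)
The plan is to construct $\tau$ explicitly from $f$ and then read the target class off Post's lattice. For an $n$-ary $f\in\mathcal{F}$ and a proper non-empty subset $S\subseteq n$, let $\mathbf{c}_S(x,y)\in A^n$ be the tuple whose $i$-th entry is $x$ for $i\in S$ and $y$ otherwise, and set $g_{f,S}(x,y)=f(\mathbf{c}_S(x,y))$. Since $g_{f,S}\in\mathcal{F}_{[2]}$ and $\mathrm{r}(\mathcal{F})\geq 3$, $g_{f,S}$ must be one of the projections $x_1,x_2$. Define $\tau(f)\colon 2^n\to 2$ by $\tau(f)(\mathbf{0})=0$, $\tau(f)(\mathbf{1})=1$, and for $\mathbf{b}\notin\{\,\mathbf{0},\mathbf{1}\,\}$
\[
\tau(f)(\mathbf{b})=\begin{cases}0,& g_{f,S(\mathbf{b})}=x_1,\\ 1,& g_{f,S(\mathbf{b})}=x_2,\end{cases}
\qquad S(\mathbf{b})=\{\,i<n\colon b_i=0\,\}.
\]
As $\tau(f)$ is built from $f$ alone, it is manifestly independent of $B$ and $\sigma$; checking the three patterns of $\mathbf{a}\in B^n$ (constantly $p$, constantly $q$, mixed) verifies the identity $f(\mathbf{a})=\sigma^{-1}(\tau(f)(\sigma(\mathbf{a})))$ whenever $\sigma(p)=0$ for $B=\{\,p,q\,\}$.

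Next I would check the structural properties of $\tau(f)$. Conservativity implies that $\tau(f)$ preserves $\mathbf{0}$ and $\mathbf{1}$. The key algebraic identity $g_{f,n\setminus S}(x,y)=g_{f,S}(y,x)$ shows that complementing $\mathbf{b}$ swaps the roles of $x_1$ and $x_2$ in the defining condition, hence $\tau(f)(\overline{\mathbf{b}})=\overline{\tau(f)(\mathbf{b})}$, i.e.\ $\tau(f)$ is self-dual. Self-duality is exactly what is needed for the formula to remain true when $\sigma$ is replaced by the opposite bijection $\sigma'$ on $B$, since $\sigma'(\mathbf{a})=\overline{\sigma(\mathbf{a})}$ and $(\sigma')^{-1}(y)=\sigma^{-1}(\overline{y})$. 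A direct substitution (intermediate values remain in $B$ by conservativity) yields $\tau(h(f_1,\ldots,f_m))=\tau(h)(\tau(f_1),\ldots,\tau(f_m))$, so $\tau(\mathcal{F})$ is a clone of self-dual Boolean functions preserving $\mathbf{0}$ and $\mathbf{1}$. By the classification recalled just before the lemma, such a clone is one of $O_1,D_1,D_2,L_4$, and I set $P=\tau(\mathcal{F})$.

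For the $r\geq 4$ clause, the same binary argument applied to ternary compositions forces every ternary function in $\tau(\mathcal{F})$ to be a Boolean projection; among the four candidate classes only $O_1$ has this property, since $D_1,D_2,L_4$ are generated by the ternary non-projections $\overline{x}y\vee\overline{x}z\vee yz$, $xy\vee yz\vee xz$ and $x\oplus y\oplus z$ respectively. Hence $P=O_1$ and each $\tau(f)$ is a projection $e^n_{i(f)}$. To prove that $f$ coincides with $e^n_{i(f)}$ on all of $A^n_{<r}$ I would proceed by pattern-collapsing: for $\mathbf{a}\in A^n_k$ with $k<r$ and $\mathrm{ran}\,\mathbf{a}=\{\,c_0,\ldots,c_{k-1}\,\}$, the $k$-ary composition $g(x_0,\ldots,x_{k-1})=f(\mathbf{a}')$ with $\mathbf{a}'_j=x_\ell$ whenever $a_j=c_\ell$ belongs to $\mathcal{F}_{[k]}$, and because $k<r$ is some projection $e^k_\ell$. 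Restricting $g$ to tuples with values in a $2$-element subset of $\mathrm{ran}\,\mathbf{a}$ and invoking the $A^n_{\leq 2}$ case pins down $\ell$ as the index for which $c_\ell=a_{i(f)}$, whence $f(\mathbf{a})=g(c_0,\ldots,c_{k-1})=c_\ell=a_{i(f)}$.

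The main obstacle, in my view, is the self-duality of $\tau(f)$: this single property simultaneously ensures independence of the chosen bijection $\sigma\colon B\to 2$ and places $\tau(\mathcal{F})$ inside the self-dual sub-lattice of Post's lattice. Once it is in place, the identification with $\{\,O_1,D_1,D_2,L_4\,\}$ and the pattern-collapsing induction for the $r\geq 4$ part are essentially bookkeeping.
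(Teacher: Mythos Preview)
Your proof is correct and follows essentially the same route as the paper's: both exploit that every function of arity below $r$ is a projection to define $\tau$ as the restriction of $f$ to a two-element subset (your explicit $g_{f,S}$ construction is exactly the paper's $\tau(f)=f|_{2^{<\omega}}$ written out coordinate-wise), verify self-duality, and then read off $P$ from the self-dual part of Post's lattice. The one presentational difference worth noting is in the $r\geq 4$ clause: the paper packages the collapse argument into a single commutation identity $f(\sigma(\mathbf a))=\sigma(f(\mathbf a))$ valid for \emph{all} maps $\sigma\colon A\to A$ and all $\mathbf a\in A^n_{<r}$, and then applies it with the non-bijective ``characteristic'' map $\sigma_{\mathbf a}$ sending $a_{i(f)}\mapsto 1$ and everything else to $0$ to get $f(\mathbf a)=a_{i(f)}$ in one line, whereas your pattern-collapsing to a $k$-ary projection and subsequent identification of its index via the already-established $|B|=2$ case achieves the same conclusion by a slightly longer but equally valid path.
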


\begin{proof} The proof follows easily from the following claim.

\begin{claim}\label{sigma} Let $\mathcal{F}\subseteq \mathcal{O}(A)$ be a conservative clone, and $\mathrm{r}(\mathcal{F})=r\geq 3$.
Let $\mathbf{a}=a_0a_1\ldots a_{n-1}\in A^{n}_{<r}$, $f\in
\mathcal{F}_{[n]}$, and $\sigma\colon A\to A$.
\par Then $f(\sigma\cdot \mathbf{a})=\sigma (f(\mathbf{a}))$.
\end{claim}

\begin{proof} It is easy to see that $f(\sigma(\mathbf{a}))=\sigma (f(\mathbf{a}))$ for any projection $f\in \mathcal{E}_{[n]}(A)$.
Further, let $\mathbf{b}=(b_0, b_1, \ldots, b_{t-1})$ be some
repetition-free sequence of all elements from $\mathrm{ran}\,
\mathbf{a}$. Denote $\xi=\mathbf{b}^{-1}(\mathbf{a})$ and consider
the function $f'=f(x_{\xi(0)},x_{\xi(1)},\ldots, x_{\xi(n-1)})\in
\mathcal{F}_{[t]}$. For any $\mathbf{c}\in A^t$ we have
\[
f'(\mathbf{c})=f(\mathbf{c}(\xi)).
\]
Since $t<r$, we have that $f'$ is a projection. Therefore,
\begin{align*} \sigma (f(\mathbf{a}))=\sigma
(f'(\mathbf{b}))=f'(\sigma(\mathbf{b}))=f(\sigma(
\mathbf{b}(\xi)))=f(\sigma(\mathbf{a})).
\end{align*}
\end{proof}

\par Without loss of generality, assume $2=\{\,0,1\,\}\subseteq A$. So, $\mathcal{F}|_{2^{<\omega}}$
is some of Post's classes of Boolean functions preserving
$\mathbf{0}$ and $\mathbf{1}$. Denote
$P=\mathcal{F}|_{2^{<\omega}}$. Claim \ref{sigma} implies that any
$g\in P$ is self-dual function (should be considered a function
$\sigma\colon A\to A$ such that $\sigma(0)=1$ and $\sigma(1)=0$).
Consequently, $P\in \{\,\mathrm{O}_1, \mathrm{D}_1, \mathrm{D}_2,
\mathrm{L}_4 \,\}$.
\par Define $\tau (f)=f|_{2^{<\omega}}$. Then $\tau$ is a surjective
function from $\mathcal{F}$ to $P$. Claim \ref{sigma} implies that
for any $B\in [A]^2$, one-to-one mapping $\sigma\colon B\to 2$,
natural number $n$, function $f\in \mathcal{F}_{[n]}$ and $n$-tuple
$\mathbf{a}\in B^n$, we have
\[
f(\mathbf{a})=\sigma^{-1}(\tau(f)(\sigma(\mathbf{a}))).
\]

\par Now let $r\geq 4$. Any class $P\in\{\,D_1, D_2, L_4 \,\}$ contains some ternary
function which is not a projection. Hence, $P=O_1$, i.e. $P$
consists only of projections $e^{n}_{i}$ ($1\leq n<\omega$, $0\leq
i<n$). Let $f$ be an arbitrary function from $\mathcal{F}_{[n]}$ for
some natural number $n$. Let~$i$ be the number for which
$\tau(f)=e^{n}_{i}$. For any $\mathbf{a}=(a_0, a_1,\ldots,
a_{n-1})\in A^{n}_{<r}$ choose a function $\sigma_{\mathbf{a}}\colon
A\to A $ for which $\sigma_{\mathbf{a}} (a_i)=1$ and
$\sigma_{\mathbf{a}}(x)=0$ for all $x\in A\setminus\{\,a_i\,\}$. By
Claim \ref{sigma} we have
\[
\sigma_{\mathbf{a}}(f(\mathbf{a}))=f(\sigma_{\mathbf{a}}(
\mathbf{a}))=(\tau(f))(\sigma_{\mathbf{a}}(\mathbf{a}))=1,
\]
whence $f(\mathbf{a})=a_i$. Therefore, $f$ coincides with the $i$-th
projection on $A^{n}_{<r}$.
\end{proof}

\par It follows from Lemma \ref{rparametr} that for any conservative clone $\mathcal{F}$ with \mbox{$\mathrm{r}(\mathcal{F})\geq
3$} each natural isomorphism $(\sigma_B,\tau_B)$, $B\in [A]^2$, maps
the clone $\mathcal{F}|_{B^{<\omega}}$ to the same Post's class $P$.
Obviously, this statement is also true for any symmetric
conservative clone. We will denote this Post's class $P$ by the
symbol $\Pi_0(\mathcal{F})$. It is easy to see that for any
symmetric conservative clone $\mathcal{F}$ the class
$\Pi_0(\mathcal{F})$ is closed with respect to duality. Thus, the
case $\mathrm{r}(\mathcal{F})= 2$ adds two more possibilities:
$\Pi_0(\mathcal{F})=A_4$ and $\Pi_0(\mathcal{F})=C_4$.

\par For any set $A$ of cardinality $3$ we denote
\[
R_{\uparrow}(A)= \{\,(ab,cd)\in A^2_2\times A^2_2\colon
ab=cd\vee(b=c\wedge a\neq d)\vee (a=d\wedge b\neq c)\,\}.
\]

\par For any set $A$ of cardinality $4$ we denote
\[
R_{\pm}(A)= \{\,(\mathbf{x},\mathbf{y})\in A^2_2\times A^2_2\colon
\mathrm{ran}\,\mathbf{x}=\mathrm{ran}\,\mathbf{y}\,\text{or}\,\mathrm{ran}\,
\mathbf{x}\cap\mathrm{ran}\,\mathbf{y}=\emptyset \,\}.
\]

\par For any set $A$ of cardinality $4$ and Post's class $P$ we say that a function $f\in \mathcal{O}(A)_{[n]}$ is Klein's
$P$-function if
\begin{enumerate}
\item for any $B\in [A]^2$, $f|_{B^{n}}$ belongs to the clone
$\mathcal{F}\subseteq \mathcal{O}(B)$ which is naturally isomorphic
to $P$,
\item for any $\mathbf{a}\in A^n_2$ and permutation $\sigma$ from Klein four-group of permutations of $A$, $\sigma
(f(\mathbf{a}))=f(\sigma(\mathbf{a}))$.
\end{enumerate}

\begin{thm}\label{main} Let $A$ be a finite set, $|A|\geq 2$. Then any symmetric
conservative clone $\mathcal{F}\subseteq \mathcal{O}(A)$ is uniquely
defined by by its characteristics, i.e.
$$
\chi(\mathcal{F})=\chi(\mathcal{G})\Rightarrow
\mathcal{F}=\mathcal{G}\eqno{(\ast\ast\ast)}
$$
for all symmetric conservative clones $\mathcal{F},
\mathcal{G}\subseteq \mathcal{O}(A)$.
\par Let $Q$ and $H\subseteq A^Q$ be non-empty finite sets and $\mathcal{F}\subseteq \mathcal{O}(A)$ a symmetric
conservative clone. Then $\mathrm{r}(\mathcal{F})\geq 2$, and
\begin{enumerate}
\item\label{rgeq4} If $\mathrm{r}(\mathcal{F})\geq 4$, then $H\in \mathrm{Inv}_Q
\mathcal{F}$ if and only if
\begin{enumerate}
\item $H|_{P}\in \mathrm{Inv}_P \mathcal{F}$ for all $P\in [Q]^{2,0}_H$,
\item $H$ is decomposable over $[Q]^1\cup [Q]^{2,0}_{H}\cup \{\,Q^{(r)}_H\,\}$.
\end{enumerate}
\item\label{req3} If $\mathrm{r}(\mathcal{F})=3$ and $\Pi_0(\mathcal{F})\neq L_4$,
then $H\in \mathrm{Inv}_Q \mathcal{F}$ if and only if
\begin{enumerate}
\item $H|_{P}\in \mathrm{Inv}_P \mathcal{F}$ for all $P\in [Q]^{2,0}_H\cup [Q]^{2,1}_H$,
\item $H$ is decomposable over $[Q]^1\cup [Q]^{2,0}_{H}\cup [Q]^{2,1}_H$.
\end{enumerate}
\item\label{req3L4} If $\mathrm{r}(\mathcal{F})=3$ and $\Pi_0(\mathcal{F})= L_4$, then $H\in \mathrm{Inv}_Q
\mathcal{F}$ if and only if
\begin{enumerate}
\item $H|_{P}\in \mathrm{Inv}_P \mathcal{F}$ for all $P\in [Q]^{2,0}_H$,
\item $H|_{Q^{(3)}_H}$ is preserved by a function $\ell\in
\mathcal{O}(A)_{[3]}$ satisfying
$\ell(x,y,y)=\ell(y,x,y)=\ell(y,y,x)=x$,
\item $H$ is decomposable over $[Q]^1\cup [Q]^{2,0}_{H}\cup \{\,Q^{(3)}_H\,\}$.
\end{enumerate}
\item\label{req2notspecial} If $\mathrm{r}(\mathcal{F})=2$ and the following cases do not
hold: (i) $|A|=4$ and \mbox{$\mathrm{R}_2(\mathcal{F})=R_{\pm}$},
(ii) $|A|=3$ and \mbox{$\mathrm{R}_2(\mathcal{F})=R_{\uparrow}$},
then $H\in \mathrm{Inv}_Q \mathcal{F}$ if and only if
\begin{enumerate}
\item $H|_{P}\in \mathrm{Inv}_P \mathcal{F}|_{B^{<\omega}}$ for all $P\in Q^{[B]}_H$ and $B\in [A]^2$,
\item $H$ is decomposable over $[Q]^1\cup [Q]^{2,id}_{H}\cup \{\,Q^{[B]}_H\colon  B\in [A]^2\,\}$.
\end{enumerate}
\item\label{req2A4} If $\mathrm{r}(\mathcal{F})=2$ and $|A|=4$, and $\mathrm{R}_2(\mathcal{F})=R_{\pm}(A)$, then $H\in
\mathrm{Inv}_Q \mathcal{F}$ if and only if
\begin{enumerate}
\item $H|_{P}\in \mathrm{Inv}_P \mathcal{F}$ for all $P\in [Q]^{2,0}_{H}$,
\item $H|_{Q^{(3)}_H}$ is preserved by all Klein's
$\Pi_0(\mathcal{F})$-functions,
\item $H$ is decomposable over $[Q]^1\cup [Q]^{2,0}_{H}\cup \{\,Q^{(3)}_H\,\}$.
\end{enumerate}
\item\label{req2A3} If $r=2$ and $|A|=3$, and $\mathrm{R}_2(\mathcal{F})=R_{\uparrow}(A)$,
then $H\in \mathrm{Inv}_Q \mathcal{F}$ if and only~if
\begin{enumerate}
\item $H|_{P}\in \mathrm{Inv}_P \mathcal{F}$ for all $P\in [Q]^{2,0}_H\cup [Q]^{2,1}_H$,
\item $H$ is decomposable over $[Q]^1\cup [Q]^{2,0}_{H}\cup [Q]^{2,1}_H$.
\end{enumerate}
\end{enumerate}
\end{thm}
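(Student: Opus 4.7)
The plan is to derive each of the six structural items (\ref{rgeq4})--(\ref{req2A3}) by reducing to the appropriate decomposition theorem (\ref{partial}, \ref{s3}, or \ref{d2}) from the previous section, and then to deduce the uniqueness claim $(\ast\ast\ast)$ as a corollary of those structural descriptions. The guiding observation is that the characteristic $\chi(\mathcal{F})$, via Lemma \ref{rparametr} and the classification of dually closed Post's classes $\{O_1,D_1,D_2,L_4,A_4,C_4\}$, determines which of the conditions $\Delta^{s}_{n}$, $\Delta^{\partial}$, $\Delta^{2}$ the clone $\mathcal{F}$ satisfies; these are precisely the hypotheses of the decomposition theorems.

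\textbf{Reducing each case to a decomposition theorem.} For $\mathrm{r}(\mathcal{F})\geq 4$, Lemma \ref{rparametr} forces $\Pi_0(\mathcal{F})=O_1$ and each $n$-ary $f\in\mathcal{F}$ coincides with a projection on $A^{n}_{<r}$; taking a non-projection $r$-ary function and using symmetry gives $\Delta^{s}_{r}$, so Theorem \ref{s3} with $n=r$ produces item (\ref{rgeq4}). For $\mathrm{r}(\mathcal{F})=3$ with $\Pi_0(\mathcal{F})\in\{D_1,D_2\}$ one has a ternary majority, and symmetry together with Claim \ref{sigma} upgrades it to a full $\Delta^{\partial}$; Theorem \ref{partial} then gives item (\ref{req3}). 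When $\mathrm{r}(\mathcal{F})=3$ and $\Pi_0(\mathcal{F})=L_{4}$, the generator $x\oplus y\oplus z$ plus symmetry yields $\Delta^{s}_{3}$ (so Theorem \ref{s3} applies) but on pairs $\{p,q\}$ with $|H(p)|=|H(q)|=2$ a Mal'cev-type obstruction survives; this is exactly what the extra clause on $H|_{Q^{(3)}_{H}}$ in item (\ref{req3L4}) records. For $\mathrm{r}(\mathcal{F})=2$ outside the two special subcases, the shape of $\mathrm{R}_2(\mathcal{F})$ gives, for any $\mathbf{a},\mathbf{b}\in A^{2}_{2}$ with $\mathrm{ran}\,\mathbf{a}\neq\mathrm{ran}\,\mathbf{b}$ and any prescribed conservative targets, a witnessing binary $w\in\mathcal{F}$, i.e.\ $\Delta^{2}$; Theorem \ref{d2} gives item (\ref{req2notspecial}). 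The two exceptional subcases are treated in the same spirit but with the surrogate condition being preservation of $H|_{Q^{(3)}_{H}}$ by Klein's $\Pi_0(\mathcal{F})$-functions in item (\ref{req2A4}), or by a residual cyclic structure on $A^{2}_{2}$ in item (\ref{req2A3}).

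\textbf{Refining the pair-sets and uniqueness.} After the appropriate decomposition theorem is invoked, I would refine the decomposition class to the one stated in the theorem. For $r\geq 4$, Lemma \ref{otdel} applied to a pair $\{p,q\}$ with $|H(p)|,|H(q)|\leq r-1$ combined with Lemma \ref{rparametr} rules out its case (3) unless $\{p,q\}\subseteq Q^{(r)}_{H}$, so the restriction to $Q^{(r)}_{H}$ absorbs all of $[Q]^{2,1}_{H}$; analogous trimmings, driven by the symmetry of $\mathcal{F}$ and the invariants recorded by $\mathrm{R}$ and $\mathrm{D}$, justify the precise classes appearing in items (\ref{req3L4})--(\ref{req2A3}). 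For uniqueness $(\ast\ast\ast)$ I would use the observation following Proposition \ref{InvPolprop} that $\mathcal{F}_{[n]}\in\mathrm{Inv}_{A^{n}}\mathcal{F}$, so $\mathcal{F}$ is determined by its family of $Q$-invariant sets for $Q=A^{n}$; it then suffices to verify that every ingredient in items (1)--(6), namely the sets $[Q]^{2,0}_{H}$, $[Q]^{2,1}_{H}$, $[Q]^{2,id}_{H}$, $Q^{(r)}_{H}$, $Q^{[B]}_{H}$, and the requirement $H|_{P}\in\mathrm{Inv}_P\mathcal{F}$, is reconstructible purely from $\mathrm{r}(\mathcal{F})$, $\mathrm{R}(\mathcal{F})$, $\mathrm{D}(\mathcal{F})$, and $\Pi(\mathcal{F})$. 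For pair-sets like $[Q]^{2,0}_{H}$ and $[Q]^{2,1}_{H}$ this is immediate from $\mathrm{R}$ and $\mathrm{D}$, and the invariance of restrictions $H|_{P}$ reduces, by natural isomorphism, to preservation by $\Pi_B$, which is encoded in $\Pi(\mathcal{F})$.

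\textbf{Main obstacle.} I expect the real work to sit in items (\ref{req3L4}), (\ref{req2A4}), and (\ref{req2A3}), where none of the three decomposition theorems applies off the shelf and additional combinatorial constraints must be both extracted from $Q$-invariance and shown to be sufficient. In particular, for item (\ref{req2A4}) one must characterise Klein's $\Pi_0(\mathcal{F})$-functions as exactly those elements of $\mathcal{O}(A)$ that commute with the Klein-four action on $A$ and restrict to a clone naturally isomorphic to $\Pi_0(\mathcal{F})$ on every $B\in[A]^{2}$, and then prove that preservation by all such functions on $H|_{Q^{(3)}_{H}}$, together with the pair invariants, is equivalent to full $Q$-invariance. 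This step relies essentially on the self-duality of $A_4$ and $C_4$ and on a careful analysis of the three two-element orbits of the Klein-four group; an analogous, but easier, cyclic analysis is required for item (\ref{req2A3}) on $|A|=3$.
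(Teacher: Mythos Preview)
Your overall architecture matches the paper's: Lemma \ref{rparametr} plus symmetry feed into one of $\Delta^{s}_{r}$, $\Delta^{\partial}$, $\Delta^{2}$, which triggers Theorem \ref{s3}, \ref{partial}, or \ref{d2}; uniqueness then follows by applying the resulting description to $H=\mathcal{F}_{[n]}\subseteq A^{A^n}$ and reading off each ingredient from $\chi(\mathcal{F})$. The paper packages the reconstruction step in a single technical claim (its Claim \ref{Invan}), and the case split is exactly the six-fold one you describe.

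There are, however, two places where your picture of the work is off.

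\textbf{The $r=2$ classification is the real core, not a side remark.} You write that ``the shape of $\mathrm{R}_2(\mathcal{F})$ gives \ldots\ $\Delta^{2}$'' outside the two exceptional subcases, as though the possible shapes of $\mathrm{R}_2(\mathcal{F})$ were already known. In the paper this is precisely what has to be proved: one introduces auxiliary preorders $\rhd_0,\rhd_1$ on $A^{2}_{2}$, shows they are symmetric-invariant and respect a five-valued ``type'' function on pairs (Claim \ref{otnTriangl}), and then runs a transitivity chase (Claim \ref{Del2Triangl}) to show that $\rhd_i$ is forced into one of exactly five patterns. Three of these patterns yield $\Delta^{2}$ directly; the other two are $R_{\pm}(A)$ with $|A|=4$ and $R_{\uparrow}(A)$ with $|A|=3$. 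This combinatorial classification is the longest single piece of the proof, and your proposal does not allocate any mechanism for it.

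\textbf{Item (\ref{req2A3}) is not exceptional in the way you suggest.} You list it among the cases ``where none of the three decomposition theorems applies off the shelf'' and anticipate a ``residual cyclic structure'' to be controlled. In fact the paper shows that in this case $\mathcal{F}$ satisfies $\Delta^{\partial}$: the relation $R_{\uparrow}(A)$ pins down $\mathcal{F}_{[2]}$ to two explicit commutative binary operations $u,v$ on the three-element set, and one checks by hand that $v(v(u(x_0,x_1),u(x_0,x_2)),u(x_1,x_2))$ is a majority function. After that, Theorem \ref{partial} applies verbatim and item (\ref{req2A3}) has exactly the same form as item (\ref{req3}) --- no extra clause is needed. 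Similarly, for item (\ref{req2A4}) the paper does not bypass the decomposition theorems: it shows $\Delta^{s}_{3}$ holds (by lifting a ternary witness from some $B\in[A]^{3}$, where $\Delta^{2}$ is available, and using the $R_{\pm}$ symmetry to verify it is a projection on all of $A^{3}_{<3}$), applies Theorem \ref{s3}, and then identifies $\mathcal{F}|_{A^{<\omega}_{<3}}$ with the Klein clone to rewrite the condition on $H|_{Q^{(3)}_H}$. So your ``main obstacle'' paragraph misplaces the difficulty: the hard step is not squeezing extra invariants out of items (\ref{req3L4}), (\ref{req2A4}), (\ref{req2A3}) after a failed decomposition, but rather proving that the appropriate $\Delta$ condition does hold in each of them, which for $r=2$ is exactly the $\rhd_i$ classification above.
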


\begin{proof} The proof is based on decomposition theorems and the following Lemma.
\begin{lm}\label{RandSim} Let $|A|$ be a set of cardinality $|A|\geq 2$, and $\mathcal{F}$ be a symmetric conservative clone with the carrier $A$.
Let $\mathrm{r}(\mathcal{F})=r<\omega$. Then
\begin{enumerate}
\item If $r\geq 4$, then $\mathcal F$ satisfies $\Delta^{s}_{r}$;
\item If $\mathrm{r}(\mathcal{F})=3$ and $\Pi_0(\mathcal{F})\neq L_4$, then $\mathcal F$ satisfies
$\Delta^{\partial}$;
\item If $\mathrm{r}(\mathcal{F})=3$ and $\Pi_0(\mathcal{F})= L_4$, then $\mathcal F$ satisfies
$\Delta^{s}_{3}$;
\item If $r= 2$, then one of the following cases holds:
\begin{enumerate}
\item $\mathcal F$ satisfies $\Delta^{2}$,
\item $|A|=4$ and $\mathrm{R}_2(\mathcal{F})=R_{\pm}(A)$, and $\mathcal F$
satisfies $\Delta^{s}_{3}$, and for all $B\in [A]^3$,
$\mathcal{F}|_{B^{<\omega}}$ satisfies $\Delta^2$,
\item $|A|=3$ and $\mathrm{R}_2(\mathcal{F})=R_{\uparrow}(A)$, and $\mathcal
F$ satisfies $\Delta^{\partial}$.
\end{enumerate}
\end{enumerate}
\end{lm}

\begin{proof} First we prove the following claim. All function $\ell\in \mathcal{O}(A)_{[3]}$ satisfying
\[
\ell(x,y,y)=\ell(y,x,y)=\ell(y,y,x)=x
\]
is called an $\ell$-function.

\begin{claim}\label{simDelty} Let $\mathcal{F}\subseteq \mathcal{O}(A)$ be a conservative symmetric clone and $n$ a natural number.
\begin{enumerate}
\item If $\mathcal{F}$ contains an $n$-ary function $f\notin \mathcal{E}(A)$ such that $f|_{A^n_{<n}}\in
\mathcal{E}(A)|_{A^n_{<n}}$, then $\mathcal{F}$ satisfies
$\Delta^{s}_{n}$.
\item If $\mathcal{F}$ contains a $\partial$-function, then $\mathcal{F}$ satisfies
$\Delta^{\partial}$.
\item If $\mathcal{F}$ contains a $\ell$-function, then $\mathcal{F}$ satisfies
$\Delta^{s}_{3}$.
\end{enumerate}
\end{claim}

\begin{proof} Let $f\in F_{[n]}\setminus \mathcal{E}(A)$ coincide with $e^n_i$ on $A^{n}_{<n}$. Then
$f(\mathbf{a})=a_j$ for some $\mathbf{a}=a_0a_1\ldots a_{n-1}\in
A^n_n$ and $j\in n\setminus\{\,i\,\}$. For any $k\in
n\setminus\{\,i,j\,\}$ denote the transposition $(a_j, a_k)\in S_A$
by $\sigma_{k}$, and the transposition $(j,k)$ by $\tau_k$. For any
$k<n$ denote
\[
s^k=
\begin{cases}
f_{\sigma_k}(x_{\tau_k(0)}, x_{\tau_k(1)}, \ldots,
x_{\tau_k(n-1)}),&\text{if $k\in n\setminus\{\,i,j\,\}$},\\
f,& \text{if $k=j$},\\
e^n_{i},&\text{if $k=i$}.
\end{cases}
\]

For all $k< n$ we have

\[
s^k(\mathbf{a})=a_k\,\,\text{and}\,\,s^k|_{A^{n}_{< n}}=
e^{n}_{i}|_{A^{n}_{< n}}.
\]

Let $\mathbf{b}$ be an arbitrary $n$-tuple from $A^{n}_{n}$ and
$\sigma_{\mathbf{b}}$ an arbitrary permutation of $A$ for which
$\sigma_{\mathbf{b}} (\mathbf{b})=\mathbf{a}$. Then for all $k<n$ we
have

\[
s^{k}_{\sigma_{\mathbf{b}}}(\mathbf{b})=b_k\,\,\text{and}\,\,s^{k}_{\sigma_{\mathbf{b}}}|_{A^{n}_{<
n}}= e^{n}_{i}|_{A^{n}_{< n}}.
\]

All the functions $s^{k}_{\sigma_{\mathbf{b}}}$ belong to
$\mathcal{F}$. Therefore, $\Delta^{e}_{n}$ holds.

\par Now let $\partial$ be a $\partial$-function from $\mathcal F$, and $\mathbf{a}=a_0a_1a_2$ an arbitrary triple from $A^{3}_{3}$.
Let $\partial(\mathbf{a})=a_i$ and $j\in 3\setminus \{\,i\,\}$.
Denote the transposition $(a_i, a_j)\in S_A$ by $\sigma_{j}$, and
the transposition $(i,j)$ by $\tau_j$. Let
$\partial^j=\partial_{\sigma_j}(x_{\tau_j(0)}, x_{\tau_j(1)},
x_{\tau_j(2)})$. Obviously, $\partial^j\in \mathcal{F}$. Moreover,
it easy to check that $\partial^j$ is a $\partial$-function and
$\partial^j(\mathbf{a})=a_j$. Therefore, $\Delta^{\partial}$ holds.

\par Arguing similarly, we find that if $\mathcal{F}$ contains an
$\ell$-function, then for any $\mathbf{a}=a_0a_1a_2\in A^{3}_{3}$
and $a\in \mathrm{ran}\, \mathbf{a}$ there is an $\ell$-function
$\ell\in \mathcal{F}_{[3]}$ such that $\ell(\mathbf{a})=a$. We show
that for any $i\in\{\,0,1,2\,\}$ there exists a function $s_i\in
\mathcal{F}_{[3]}$ such that
\begin{align*}
s_i(\mathbf{a})=a_i\,\,\text{and}\,\,s(\mathbf{x})=x_0\,\,\text{for
all $\mathbf{x}=x_0x_1x_{2}\in A^{3}_{<3}$}.
\end{align*}
\par If $i=0$, we put $s_0=e^{3}_{0}$. If $i\neq 0$, choose $\ell$-functions $\ell_0, \ell_i\in \mathcal{F}$ such
that $\ell_0(\mathbf{a})=a_0$ and $\ell_i(\mathbf{a})=a_i$. It is
easy to verify that the function
\[
s_i=\ell_0(x_0, \ell_{0}, \ell_{i})
\]
satisfies the required conditions.
\end{proof}

Now, for $r\geq 3$, Lemma \ref{RandSim} follows immediately from
Lemma \ref{sigma} and Claim \ref{simDelty}.

\par Before considering the Case $r=2$, we prove several auxiliary assertions.

\par For any pair $(\mathbf{a},\mathbf{b})\in A^{2}_{2}\times
A^{2}_{2}$ we define the \emph{type}
$\mathrm{t}(\mathbf{a},\mathbf{b})\in 2\cup 2^2\cup \{\,2\,\}$ of
$(\mathbf{a},\mathbf{b})$ as follows. Let $\mathbf{a}=a_0a_1$ and
$\mathbf{b}=b_0b_1$. Then for all $i,j\in \{\,0,1\,\}$
$$
\mathrm{t}(\mathbf{a},\mathbf{b})=
\begin{cases}
0,&\text{if $\mathbf{a}=\mathbf{b}$}\\
1,&\text{if $a_0=b_1$ and $a_1=b_0$}\\
ij,&\text{if $a_i=b_j$ and $a_{1-i}\neq b_{1-j}$}\\
2,&\text{if $\mathrm{ran}  \mathbf{a}\cap \mathrm{ran}  \mathbf{b}=\emptyset$}\\
\end{cases}
$$
Obviously,  the type $\mathrm{t}(\mathbf{a},\mathbf{b})$ is defined
for every $(\mathbf{a},\mathbf{b})\in A^{2}_{2}\times A^{2}_{2}$.
\par For any $i\in\{\,0,1\,\}$ we denote the binary relation $\rhd_i$ on
$A^{2}_{2}$ by
\[
\mathbf{a}\rhd_i \mathbf{b}\leftrightarrow \left(\left(\forall f\in
\mathcal{F}_{[2]}\right) f(\mathbf{a})=a_i\rightarrow
f(\mathbf{b})=b_i\right)
\]
for all $\mathbf{a}=a_0a_1,\mathbf{b}=b_0b_1\in A^{2}_{2}$.
\par For any pair $\mathbf{a}=a_0a_1\in A^{2}_{2}$, the symbol $\overline{
\mathbf{a}}$ denote the pair $a_1a_0$.

\begin{claim}\label{otnTriangl} For every  $i\in\{\,0,1\,\}$ the binary relation $\rhd_i$ is  reflexive and transitive.
Besides, for every $i\in\{\,0,1\,\}$, pairs
$\mathbf{a},\mathbf{b},\mathbf{a'}, \mathbf{b'}\in A^{2}_{2}$  and
permutation $\sigma\in S_{A}$
\begin{enumerate}
\item\label{1otnTriangl} $\mathbf{a}\rhd_i
\mathbf{b}\Rightarrow \sigma (\mathbf{a})\rhd_i\sigma(\mathbf{b})$,
\item\label{2otnTriangl} $\mathbf{a}\rhd_i \mathbf{b}\Rightarrow
\overline{\mathbf{a}}\rhd_{1-i} \overline{\mathbf{b}}$,
\item\label{3otnTriangl} $\mathbf{a}\rhd_i \mathbf{b}\Rightarrow
\mathbf{b}\rhd_{1-i}\mathbf{a}$,
\item\label{4otnTriangl} $t(\mathbf{a}, \mathbf{b})= t(\mathbf{a'},
\mathbf{b'})\Rightarrow (\mathbf{a}\rhd_i \mathbf{b}\rightarrow
\mathbf{a'}\rhd_i \mathbf{b'})$.
\end{enumerate}
\end{claim}

\begin{proof}
The reflexivity and transitivity of $\rhd_0$ and $\rhd_1$ are
obvious.

\par Let $\mathbf{a}=a_0a_1$, $\mathbf{b}=b_0b_1$ and $\mathbf{a}\rhd_i
\mathbf{b}$. Without loss of generality, $i=0$.

\par Assume
$f(\sigma(\mathbf{a}))=\sigma(a_0)$ for some $f\in
\mathcal{F}_{[2]}$ and $\sigma\in S_A$. Then we have $\sigma
(f_\sigma(\mathbf{a}))= f(\sigma(\mathbf{a}))=\sigma(a_0)$ and, so,
$f_\sigma(\mathbf{a})=a_0$, whence $f(\sigma(\mathbf{b}))=\sigma
(f_\sigma(\mathbf{b}))=\sigma(b_0)$. Item \ref{1otnTriangl} is
proved.

\par Assume $f(\overline{\mathbf{a}})=f(a_1a_0)=a_0$ for some $f\in
\mathcal{F}_{[2]}$. Let $g=f(x_1, x_0)$. Then we have
$g(a_0a_1)=f(a_1a_0)=a_0$ and, so,
$f(\overline{\mathbf{b}})=f(b_1b_0)=g(b_0b_1)=b_0$.
Item~\ref{2otnTriangl} is proved.

\par Assume $f(\mathbf{b})=f(b_0b_1)=b_1$ for some $f\in
\mathcal{F}_{[2]}$. If $f(\mathbf{a})=f(a_0a_1)=a_0$, we have
$f(\mathbf{b})=b_0$, a contradiction. So, $f(\mathbf{a})=a_1$. Item
\ref{3otnTriangl} is proved.

\par Item \ref{4otnTriangl} follows immediately from item
\ref{1otnTriangl}.

\end{proof}

\begin{claim}\label{Del2Triangl}  One of the following five cases holds.
\begin{enumerate}
\item\label{1Del2Triangl} $(\forall i<2)\,(\forall \mathbf{x},\mathbf{y}\in
A^{2}_{2})\,\mathbf{x}\rhd_i \mathbf{y}$,
\item\label{2Del2Triangl} $(\forall i<2)\,(\forall \mathbf{x},\mathbf{y}\in
A^{2}_{2})\,\mathbf{x}\rhd_i \mathbf{y}\leftrightarrow
\mathrm{t}(\mathbf{x},\mathbf{y})=0$,
\item\label{3Del2Triangl} $(\forall i<2)\,(\forall \mathbf{x},\mathbf{y}\in
A^{2}_{2})\,\mathbf{x}\rhd_i \mathbf{y}\leftrightarrow
\mathrm{t}(\mathbf{x},\mathbf{y})\in\{\,0,1\,\}$,
\item\label{4Del2Triangl} $|A|=4\wedge(\forall i<2)\,(\forall \mathbf{x},\mathbf{y}\in
A^{2}_{2})\,\mathbf{x}\rhd_i \mathbf{y} \leftrightarrow
\mathrm{t}(\mathbf{x},\mathbf{y})\in\{\,0,1,2\,\}$,
\item\label{5Del2Triangl} $|A|=3\wedge(\forall i<2)\,(\forall \mathbf{x},\mathbf{y}\in
A^{2}_{2})\,\mathbf{x}\rhd_i \mathbf{y} \leftrightarrow
\mathrm{t}(\mathbf{x},\mathbf{y})\in\{\,0,01,10\,\}$.
\end{enumerate}
\end{claim}
\begin{proof} Let $i$ be a fixed number in $\{\,0,1\,\}$.

\par Let $\rhd_i$ contain some pair $(\mathbf{a},\mathbf{b})\in A^{2}_{2}$
of type $00$. Let $\mathbf{a}=a_0a_1$ and $\mathbf{b}=a_0b_1$. Then
we have
\par $(a)$ $a_0a_1\rhd_{i}a_0b_1$ (supposition),
\par $(b)$ $a_0b_1\rhd_{1-i}a_0a_1$ from $(a)$ by item \ref{2otnTriangl}
of Claim \ref{otnTriangl},
\par $(c)$ $b_1a_0\rhd_{i}a_1a_0$ from $(b)$ by item
\ref{3otnTriangl} of Claim \ref{otnTriangl},
\par $(d)$ $a_0b_1\rhd_{i}a_1b_1$ from $(c)$ by item \ref{4otnTriangl}
of Claim \ref{otnTriangl},
\par $(e)$ $a_0a_1\rhd_{i} a_1b_1$ from $(a)$ and $(c)$ by transitivity,
\par $(f)$ $a_1b_1\rhd_{i} b_1a_0$ from $(e)$ by item \ref{4otnTriangl}
of Claim \ref{otnTriangl},
\par $(g)$ $a_0a_1\rhd_{i} b_1a_0$ from $(a)$ and $(f)$ by transitivity,
\par $(h)$ $a_0b_1\rhd_{i} a_1a_0$ from $(g)$ by item \ref{4otnTriangl}
of Claim \ref{otnTriangl},
\par $(i)$ $a_0a_1\rhd_{i} a_1a_0$ from $(a)$ and $(h)$ by transitivity.
\par The pairs from $(c), (e), (g), (i)$ have the types $11$,
$10$, $01$ and $1$, respectively. Given the reflexivity of $\rhd_i$
and item \ref{4otnTriangl} of Claim \ref{otnTriangl}, we have
\[
\mathbf{x}\rhd_i\mathbf{y}\,\,\text{for all
$(\mathbf{x},\mathbf{y})$ such that
$\mathrm{t}(\mathbf{x},\mathbf{y})\neq 2$}.
\]
If $|A|=3$, we have the case \ref{1Del2Triangl}. If $|A|\geq 4$,
choose $c\in A\setminus \{\,a_0, a_1, b_1\,\}$, and continue.
\par $(j)$ $a_0b_1\rhd_{i} b_1c$ from $(e)$ by item \ref{4otnTriangl}
of Claim \ref{otnTriangl},
\par $(k)$ $a_0a_1\rhd_{i} b_1c$ from $(a)$ and $(j)$ by transitivity.
\par  The pair $(a_0a_1, b_1c)$ has the type $2$. So,  we have the case \ref{1Del2Triangl}.

\par If $\rhd_i$ contains some pair $(\mathbf{a},\mathbf{b})\in A^{2}_{2}$
of type $11$, arguments are similar. Furthermore, we assume that
$\rhd_i$ does not contain pairs $(\mathbf{a},\mathbf{b})$ of
type~$00$ or~$11$.

\par Let $\rhd_i$ contain some  pair $(\mathbf{a},\mathbf{b})\in
A^{2}_{2}$ of type $01$. Let $\mathbf{a}=a_0a_1$ and
$\mathbf{b}=a_1b_1$. We have
\par $(a')$ $a_0a_1\rhd_{i}a_1b_1$ (supposition),
\par $(b')$ $a_1b_1\rhd_{i}b_1a_0$ from $(a')$ by item \ref{4otnTriangl}
of Claim \ref{otnTriangl},
\par $(c')$ $a_0a_1\rhd_{i} b_1a_0$ from $(a')$ и $(b')$ by transitivity.
\par So, $\mathbf{x}\rhd_i\mathbf{y}$ for all $(\mathbf{x},\mathbf{y})$ of type $10$.

\par If $\mathbf{x}\rhd_i\mathbf{y}$ for some pair $(\mathbf{x},\mathbf{y})$ of type $1$,
we have
\par $(d')$ $a_1a_0\rhd_{i}a_0a_1$ (supposition),
\par $(e')$ $a_1a_0\rhd_{i}a_1b_1$ from $(a')$ and $(d')$ by
transitivity.
\par A contradiction because $\mathrm{t}(a_1a_0,a_1b_1)=00$.

\par Therefore, $\rhd_i$ contains all pairs
of types $0, 01, 10$, and does not contain  any pair of types $1,
00, 11$. If $|A|=3$ we have a case \ref{5Del2Triangl}. If $|A|\geq
4$, choose $c\in A\setminus \{\,a_0, a_1, b_1\,\}$, and continue.
\par $(f')$ $a_1b_1\rhd_{i}b_1c$ from $(a')$ by item \ref{4otnTriangl}
of Claim \ref{otnTriangl},
\par $(g')$ $a_0a_1\rhd_{i}b_1c$ from $(a')$ and $(f')$ by transitivity,
\par $(h')$ $b_1c\rhd_{i}a_1a_0$ from $(g')$ by item \ref{4otnTriangl}
of Claim \ref{otnTriangl},
\par $(i')$ $a_0a_1\rhd_{i}a_1a_0$ from $(a')$ and $(h')$ by transitivity.
\par A contradiction because
$\mathrm{t}(a_0a_1, a_1a_0)=1$.

\par If $\rhd_i$ contains some pair of type $10$, arguments are
similar. Now we can assume that all pairs
$(\mathbf{x},\mathbf{y})\in \rhd_i$ have the type $0$, $1$, or $2$.

\par Let $\rhd_i$ contain some pair $(\mathbf{a},\mathbf{b})\in
A^{2}_{2}$ of type $2$. Let $\mathbf{a}=a_0a_1$ and
$\mathbf{b}=b_0b_1$. We have
\par $(a'')$ $a_0a_1\rhd_{i}b_0b_1$ (supposition),
\par $(b'')$ $b_0b_1\rhd_{i}a_1a_0$ from $(a'')$ by item \ref{4otnTriangl}
of Claim \ref{otnTriangl},
\par $(c'')$ $a_0a_1\rhd_{i}a_1a_0$ from $(a'')$ and $(b'')$ by transitivity.

\par Note that $\mathrm{t}(a_0a_1,a_1a_0)=1$. If $|A|=4$, we have the case \ref{4Del2Triangl}. If $|A|\geq
5$, choose $c\in A\setminus\{\,a_0,a_1,b_0,b_1\,\}$, and continue.
\par $(d'')$ $b_0b_1\rhd_{i}ca_0$ from $(a'')$ by item \ref{4otnTriangl}
of Claim \ref{otnTriangl},
\par $(e'')$ $a_0a_1\rhd_{i}ca_0$ from $(a'')$ and $(d'')$ by transitivity, a contradiction because
$\mathrm{t}(a_0a_1,ca_0)=01$.

\par Now we can assume that all pairs
$(\mathbf{x},\mathbf{y})\in \rhd_i$ have the type $0$ or $1$. By
item \ref{4otnTriangl} of Claim \ref{otnTriangl} we have that one of
the cases \ref{2Del2Triangl},~\ref{3Del2Triangl} holds.
\end{proof}
\begin{claim}\label{rhdequal} $\rhd_0=\rhd_1=\mathrm{R}_2(\mathcal{F})$.
\end{claim}
\begin{proof} By Claim \ref{Del2Triangl}, we have $(\mathbf{x},\mathbf{y})\in \rhd_i\Leftrightarrow (\mathbf{y},\mathbf{x})\in
\rhd_i$. It remains to use item \ref{3otnTriangl} of Claim
\ref{otnTriangl}.
\end{proof}

\par We continue the proof of Lemma \ref{RandSim}. Let $r=2$. Consider the cases of Claim~\ref{Del2Triangl}, taking into account Claim \ref{rhdequal}.
The case \ref{1Del2Triangl} implies   $r\geq 3$, a contradiction.
The cases \ref{2Del2Triangl} and \ref{3Del2Triangl} imply
$\Delta^2$.

\par Let the case \ref{4Del2Triangl} holds. We have $\mathrm{R}_2(\mathcal{F})=R_{\pm}(A)$ by Claim \ref{rhdequal}.
Choose an arbitrary set $B\in[A]^3$. It's easy to see that the clone
$\mathcal{G}=\mathcal F\upharpoonright B^{<\omega}$ satisfies
$\Delta^2$. Since $\mathcal{G}_{[3]}\in
\mathrm{Inv}_{A^3}\mathcal{G}$, Theorem \ref{d2} implies that
$\mathcal G$ contains some ternary function $f$ for which
\[
f|_{B^{3}_{<3}}=e^{3}_{0}|_{B^{3}_{<3}}\,\,\text{and}\,\,f(\mathbf{a})=a_1
\]
for some $\mathbf{a}=a_0a_1a_2\in B^{3}_{3}$.

\par  For any $\mathbf{b}\in A^{3}_{2}\setminus
B^{3}_{2}$ there is $\mathbf{c}\in B^{3}_{2}$ and $\sigma\in S_A$
such that $\sigma (\mathbf{b}) =\mathbf{c}$. Without loss of
generality we take $\mathbf{b}=b_0b_0b_1$ and
$\mathbf{c}=c_0c_0c_1$. Denote $f'=f(x_0, x_0, x_{1})$. Since
$b_0b_1\rhd_ic_0c_1$ for any $i\in\{\,0,1\,\}$, we have
\[
f(b_0b_0b_1)=b_0\leftrightarrow f'(b_0b_1)=b_0\leftrightarrow
f'(c_0c_1)=c_0\leftrightarrow f(c_0c_0c_1),
\]
so, $f$ coincides with the projection $e^{3}_{0}$ on the whole set
$A^{3}_{<3}$. It remains to use Claim \ref{simDelty}.

\par Finally, let the case \ref{5Del2Triangl} holds. We have $\mathrm{R}_2(\mathrm{F})=R_{\uparrow}(A)$ by Claim \ref{rhdequal}.
It is easy to see that each binary function $f\in \mathcal
F\setminus \mathcal{E}(A)$ is uniquely determined by its value on
any $\mathbf{a}\in A^{2}_{2}$. Let  $A=\{\,a,b,c\,\}$. Then
$\mathcal F$ contains exactly two functions $u$ и $v$ that are not
projections defined by

\begin{table}[h]
\centering
\begin{tabular}{c|ccc}
  $u$      &              $a$ & $b$ & $c$ \\\hline
  $a$          &          $a$ & $b$ & $a$ \\
  $b$          &          $b$ & $b$ & $c$ \\
  $c$          &          $a$ & $c$ & $c$ \\
   \end{tabular}
\qquad\qquad
\begin{tabular}{c|ccc}
  $v$      &              $a$ & $b$ & $c$ \\\hline
  $a$          &          $a$ & $a$ & $c$ \\
  $b$          &          $a$ & $b$ & $b$ \\
  $c$          &          $c$ & $b$ & $c$ \\
   \end{tabular}
\end{table}
\par Consider the function
\[
f=v(v(u(x_0,x_1),u(x_0,x_2)),u(x_1,x_2)).
\]
It easy to check that $f$ is $\partial$-function (for this it is
convenient to note that both functions $u$ and $v$ are commutative
and $u(x,y)=x\leftrightarrow v(x,y)=y$). It is remain to use Claim
\ref{simDelty}.
\end{proof}
\par Now the proof of the theorem reduces to an analysis of the cases of
Lemma \ref{RandSim}. We will collect some useful observations in one
claim.
\begin{claim}\label{Invan} Let $A$ and $Q$ be non-empty sets.
\begin{enumerate}
\item\label{Invan1} For any clone $\mathcal{F}\subseteq \mathcal{O}(A)$ and natural number $n$, $\mathcal{F}_{[n]}\in
\mathrm{Inv}_{A^n}\mathcal{F}$.
\item\label{Invan2} For any sets $H\subseteq A^Q$ and $P\in [Q]^1$, the
set $H|_P$ belongs to $\mathrm{Inv}_P\mathcal{F}$ for any
conservative clone $\mathcal{F}\subseteq \mathcal{O}(A)$.  Moreover,
for all conservative clones $\mathcal{F}, \mathcal{G}\subseteq
\mathcal{O}(A)$,
$[A^n]^1_{\mathcal{F}_{[n]}}=[A^n]^1_{\mathcal{G}_{[n]}}=\{\,\{\,\textbf{x}\,\}\colon
\textbf{x}\in A^n\,\}$, and $\mathcal{F}_n|_{P}=\mathcal{G}_n|_{P}$
for all $P\in [A^n]^1_{\mathcal{F}_{[n]}}$.
\item\label{Invan2a} For all clones $\mathcal{F}, \mathcal{G}\subseteq
\mathcal{C}(A)$ and a set $H\subseteq A^Q$ if
\mbox{$\mathcal{F}|_{A^{<\omega}_{<r}}=\mathcal{G}|_{A^{<\omega}_{<r}}$}
and \mbox{$(\forall q\in Q)\,|H(q)|<r$}, then $H\in
\mathrm{Inv}_Q\mathcal{F}\Leftrightarrow H\in
\mathrm{Inv}_Q\mathcal{G}$.
\item\label{Invan2b} For any conservative clone $\mathcal{F}\subseteq
\mathcal{O}(A)$, set $B\subseteq A$ and set $H\subseteq B^Q$, $H\in
\mathrm{Inv}_Q\mathcal{F}\Leftrightarrow H\in
\mathrm{Inv}_Q\mathcal{F}|_{B^{<\omega}}$.
\item\label{Invan3} For any clone $\mathcal{F}\subseteq \mathcal{O}(A)$ and natural number
$n$,
\[
\left[A^n\right]^{2,0}_{\mathcal{F}_{[n]}}=\{\,\{\,\mathbf{x},\mathbf{y}\,\}\in
\left[A^n\right]^2\colon  \mathbf{x}\neq
\mathbf{y}\,\text{and}\,(\mathbf{x},\mathbf{y})\in
\mathrm{R}_n(\mathcal{F})\,\}.
\]
Moreover, for all $\mathbf{x}=x_0x_1\ldots x_{n-1}$ and
$\mathbf{y}=y_0,y_1\ldots y_{n-1}$ in $A^n$, if
\mbox{$(\mathbf{x},\mathbf{y})\in \mathrm{R}_n(\mathcal{F})$}, then
\[
f(\mathbf{x})=x_i\Leftrightarrow f(\mathbf{y})=y_i
\]
for any $f\in \mathcal{F}_{[n]}$ and $i<n$. Therefore, for all
conservative clones \mbox{$\mathcal{F}, \mathcal{G}\subseteq
\mathcal{O}(A)$} if
$\mathrm{R}_{n}(\mathcal{F})=\mathrm{R}_{n}(\mathcal{G})$, then
$\left[A^n\right]^{2,0}_{\mathcal{F}_{[n]}}=\left[A^n\right]^{2,0}_{\mathcal{G}_{[n]}}$
and for all $P\in \left[A^n\right]^{2,0}_{\mathcal{F}_{[n]}}$,
$\mathcal{F}_{[n]}|_{P}=\mathcal{G}_{[n]}|_{P}$.
\item\label{Invan4} For any clone $\mathcal{F}\subseteq \mathcal{O}(A)$ and natural number
$n$,
\[
\left[A^n\right]^{2,1}_{\mathcal{F}_{[n]}}=\{\,\{\,\mathbf{x},\mathbf{y}\,\}\in
\left[A^n\right]^2\colon  \mathbf{x}\neq
\mathbf{y}\,\text{and}\,(\mathbf{x},\mathbf{y})\in
\mathrm{D}_n(\mathcal{F})\,\}.
\]
Moreover, if $\mathcal{F}$ is a conservative clone, then  for all
$\mathbf{x}$ and $\mathbf{y}$ in $A^n$ if
\mbox{$(\mathbf{x},\mathbf{y})\in\mathrm{D}_n(\mathcal{F})\setminus
\mathrm{R}_n(\mathcal{F})$}  and $\max(|\mathrm{ran}\,\mathbf{x}|,
|\mathrm{ran}\,\mathbf{y}|)\geq 2$, then
\begin{enumerate}
\item there is the unique pair $(a,b)\in A^2$ for which
$f(\mathbf{x})=a\vee f(\mathbf{y})=b$, and
\item $\mathcal{F}_{n}|_{\{\,\mathbf{x},\mathbf{y}\,\}}$ contains all
functions $f\in A^{\{\,\mathbf{x},\mathbf{y}\,\}}$ satisfying
\mbox{$f(\mathbf{x})\in \mathrm{ran}\,\mathbf{x}$},
$f(\mathbf{y})\in \mathrm{ran}\, \mathbf{y}$, and
$f(\mathbf{x})=a\vee f(\mathbf{y})=b$.
\end{enumerate}
Therefore, for all conservative clones $\mathcal{F},
\mathcal{G}\subseteq \mathcal{O}(A)$ if
$\mathrm{D}_{n}(\mathcal{F})=\mathrm{D}_{n}(\mathcal{G})$, then
$\left[A^n\right]^{2,1}_{\mathcal{F}_{[n]}}=\left[A^n\right]^{2,1}_{\mathcal{G}_{[n]}}$
and $\mathcal{F}_{[n]}|_{P}=\mathcal{G}_{[n]}|_{P}$ for all
\mbox{$P\in \left[A^n\right]^{2,0}_{\mathcal{F}_{[n]}}\setminus
\left[A^n\right]^{2,1}_{\mathcal{F}_{[n]}}$}.

\item\label{Invan5} For any conservative clone $\mathcal{F}\subseteq \mathcal{O}(A)$
with $\mathrm{r}(\mathcal{F})=3$ and $\Pi_0(\mathcal{F})=L_4$,
$\mathcal{F}|_{A^{<\omega}_{<3}}=\mathcal{L}(A)|_{A^{<\omega}_{<3}}$
where $\mathcal{L}(A)$ is the clone generated by all conservative
functions $\ell\in \mathcal{O}(A)_{[3]}$ satisfying
$\ell(x,y,y)=\ell(y,x,y)=\ell(y,y,x)=x$.
\item\label{Invan6} For any conservative symmetric clone $\mathcal{F}\subseteq
\mathcal{O}(A)$, if $|A|=4$, \mbox{$\mathrm{r}(\mathcal{F})=2$} and
$\mathrm{R}_2(\mathcal{F})=R_{\pm}$, then
$\mathcal{F}|_{A^{<\omega}_{<3}}=\mathcal{K}(A)|_{A^{<\omega}_{<3}}$
where $\mathcal{K}(A)$ is the set (in~fact, the clone) of all
conservative Klein's $\Pi_0(\mathcal{F})$-functions.
\end{enumerate}
\end{claim}
\begin{proof}By a direct verification.
\end{proof}

\par Case 1 ($\mathrm{r}(\mathcal{F})\geq 4$). By Lemma \ref{rparametr}
any function $f\in \mathcal{F}_{[n]}$ coincides with a projection on
$A^n_{<r}$. Therefore, by item \ref{Invan2a} of Claim \ref{Invan},
any set $H\subseteq A^P$ satisfying $(\forall p\in P)\, |H(p)|<r$
belongs to $\mathrm{Inv}_P\mathcal{F}$.  So, item \ref{rgeq4} of
Theorem \ref{main} follows from item \ref{Invan2} of Claim
\ref{Invan} and Theorem \ref{s3}. To prove the statement
$(\ast\ast\ast)$ in this case, it suffices to note that
\mbox{$(A^n)^{(r)}_{\mathcal{F}_{[n]}}=A^n_{<r}$} and
\mbox{$\mathcal{F}_{[n]}|_{A^n_{<r}}=\mathcal{E}_{[n]}|_{A^n_{<r}}$}
for any conservative clone $\mathcal{F}\subseteq \mathcal{O}(A)$
with $\mathrm{r}(\mathcal{F})\geq 4$, and use items~\ref{Invan1},
\ref{Invan2}, \ref{Invan3} of Claim \ref{Invan}, and Theorem
\ref{s3}.

\par Case 2 ($\mathrm{r}(\mathcal{F})= 3$ and $\Pi_0(\mathcal{F})\neq
L_4$). Item \ref{req3} of Theorem \ref{main} follows from item
\ref{Invan2} of Claim \ref{Invan} and Theorem \ref{partial}. The
statement $(\ast\ast\ast)$ follows from items \ref{Invan1},
\ref{Invan2}, \ref{Invan3}, \ref{Invan4} of Claim \ref{Invan}, and
Theorem \ref{partial}.

\par Case 3 ($\mathrm{r}(\mathcal{F})= 3$ and $\Pi_0(\mathcal{F})=
L_4$). Item \ref{req3L4} of Theorem \ref{main} follows from items
\ref{Invan2}, \ref{Invan2a}, \ref{Invan5} of Claim \ref{Invan} and
Theorem \ref{s3}. To prove the statement $(\ast\ast\ast)$ in this
case, it suffices to note that
$(A^n)^{(3)}_{\mathcal{F}_{[n]}}=A^n_{<3}$ for any conservative
clone \mbox{$\mathcal{F}\subseteq \mathcal{O}(A)$}, and use items
\ref{Invan1}, \ref{Invan2}, \ref{Invan3}, \ref{Invan5} of Claim
\ref{Invan}, and Theorem \ref{s3}.

\par Case 4 ($\mathrm{r}(\mathcal{F})= 2$ and the following cases do not
hold: (i) $|A|=4$ and $\mathrm{R}_2(\mathcal{F})=R_{\pm}$, (ii)
$|A|=3$ and $\mathrm{R}_2(\mathcal{F})=R_{\uparrow}$). Item
\ref{req2notspecial} of Theorem \ref{main} follows from items
\ref{Invan2}, \ref{Invan2b} of Claim \ref{Invan} and Theorem
\ref{d2}. To prove the statement $(\ast\ast\ast)$ in this case, note
that for any conservative symmetric clones \mbox{$\mathcal{F},
\mathcal{G}\subseteq \mathcal{O}(A)$} and set $B\in [A]^2$,
$(A^n)^{[B]}_{\mathcal{F}_{[n]}}=(A^n)^{[B]}_{\mathcal{G}_{[n]}}=B^n$
and, if $\Pi_0(\mathcal{F})=\Pi_0(\mathcal{G})$,
\mbox{$\mathcal{F}_{[n]}|_{B^n}=\mathcal{G}_{[n]}|_{B^n}$}. Besides,
$[A^n]^{2,id}_{\mathcal{F}_{[n]}}=\emptyset$ for any clone
$\mathcal{F}\subseteq \mathcal{O}(A)$. Now it is sufficient to use
items \ref{Invan1}, \ref{Invan2} of Claim \ref{Invan}, and Theorem
\ref{d2}.

\par Case 5 ($\mathrm{r}(\mathcal{F})= 2$, $|A|=4$ and $\mathrm{R}_2(\mathcal{F})=R_{\pm}$).
Item \ref{req2A4} of Theorem~\ref{main} follows from items
\ref{Invan2}, \ref{Invan2a}, \ref{Invan6} of Claim \ref{Invan} and
Theorem \ref{s3}. To prove the statement $(\ast\ast\ast)$ in this
case, it suffices to note that
$(A^n)^{(3)}_{\mathcal{F}_{[n]}}=A^n_{<3}$ for any conservative
clone $\mathcal{F}\subseteq \mathcal{O}(A)$, and use items
\ref{Invan1}, \ref{Invan2}, \ref{Invan3}, \ref{Invan6} of Claim
\ref{Invan}, and Theorem \ref{s3}.

\par Case 6 ($\mathrm{r}(\mathcal{F})= 2$, $|A|=3$ and $\mathrm{R}_2(\mathcal{F})=R_{\uparrow}$).
Item \ref{req2A3} of Theorem \ref{main} follows from item
\ref{Invan2} of Claim \ref{Invan} and Theorem \ref{partial}. The
statement $(\ast\ast\ast)$ follows from items \ref{Invan1},
\ref{Invan2}, \ref{Invan3}, \ref{Invan4}, and Theorem \ref{partial}.

\end{proof}
\textbf{Remarks}. The classification of symmetric conservative
clones $\mathcal{F}$ with a finite carrier can be further detailed,
since the relations $\mathrm{R}(\mathcal{F})$ and
$\mathrm{D}(\mathcal{F})$ have a very special form in this case, see
\cite{Polyakov2016}. The parameter $\mathrm{r}(\mathcal{F})$ is
introduced in \cite{Shelah2005}. Symmetric clones with a finite
carrier containing all constants are described in \cite{Xoa} (note
that conservative clones, on the contrary, do not contain any
constants).


\end{document}